\newtheorem{thm}{Theorem}[section]
\newtheorem{lem}[thm]{Lemma}
\newtheorem{prop}[thm]{Proposition}
\newtheorem{cor}[thm]{Corollary}
\theoremstyle{definition}
\newtheorem{df}[thm]{Definition}
\theoremstyle{remark}
\newtheorem{rem}[thm]{Remark}
\numberwithin{equation}{section}
\title{On Orbits of Order Ideals of Minuscule Posets II: Homomesy}
\author{David B Rush \and Kelvin Wang}
\address{Department of Mathematics, Massachusetts Institute of Technology, 77 Massachusetts Ave, Cambridge, MA 02139}
\email{dbr@mit.edu}
\address{Department of Mathematics, Harvard University, 1 Oxford St, Cambridge, MA 02138}
\email{kelvinwang@college.harvard.edu}
\date{\today}
\begin{document}

\begin{abstract}
The Fon-Der-Flaass action partitions the order ideals of a poset into disjoint orbits.  For a product of two chains, Propp and Roby observed --- across orbits --- the mean cardinality of the order ideals within an orbit to be invariant.  That this phenomenon, which they christened homomesy, extends to all minuscule posets is shown herein.  

Given a minuscule poset $P$, there exists a complex simple Lie algebra $\mathfrak{g}$ and a representation $V$ of $\mathfrak{g}$ such that the lattice of order ideals of $P$ coincides with the weight lattice of $V$.  For a weight $\mu$ with corresponding order ideal $I$, it is demonstrated that the behavior of the Weyl group simple reflections on $\mu$ not only uniquely determines $\mu$, but also encodes the cardinality of $I$.  After recourse to work of Rush and Shi mapping the anatomy of the lattice isomorphism, the upshot is a uniform proof that the cardinality statistic exhibits homomesy.  

A further application of these ideas shows that the statistic tracking the number of maximal elements in an order ideal is also homomesic, extending another result of Propp and Roby.  
\end{abstract}

\maketitle

\section{Introduction}

Whenever the minuscule posets --- a class of partially ordered sets comprising three infinite families and two exceptional cases --- are seen to satisfy a combinatorial property, a search for a uniform explanation invariably follows.  Recently, the minuscule posets have emerged at the front lines of inquiry into the Fon-Der-Flaass action, an action on order ideals of a poset introduced in its original form on hypergraphs by Duchet in 1974 \cite{Duchet}.  First to consider the action in a minuscule setting was Fon-Der-Flaass himself \cite{FDF}, who computed the order of the action in products of two chains, which constitute the first infinite family of minuscule posets.  Subsequent work of Stanley \cite{StanleyP} on products of two chains permitted Striker and Williams \cite{Striker} to note that the Fon-Der-Flaass action in the first two infinite families of minuscule posets exhibits the cyclic sieving phenomenon of Reiner, Stanton, and White \cite{Reiner}.  In collaboration with Shi, the first author then gave a uniform proof \cite{Rush} that cyclic sieving occurs in all minuscule posets.  

This sequel builds on the techniques of Rush--Shi \cite{Rush} to demonstrate that two statistics on order ideals --- the first associating to each order ideal its cardinality, and the second the cardinality of its antichain of maximal elements --- exhibit homomesy with respect to the the Fon-Der-Flaass action in minuscule posets.  Our immediate aim is to generalize the corresponding results of Propp and Roby \cite{Propp} on homomesy of the Fon-Der-Flaass action in products of two chains.  But in tracing homomesy as well as cyclic sieving back to origins in the representation theory from which the minuscule posets arise, we seek ultimately to underscore the consistency of the behavior of the Fon-Der-Flaass action in minuscule posets and again affirm that the approach inaugurated in \cite{Rush} exposes the Fon-Der-Flaass action to algebraic avenues of attack.  

Let $P$ be a poset, and let $J(P)$ be the set of order ideals of $P$, partially ordered by inclusion.  The Fon-Der-Flaass action $\Psi \colon J(P) \rightarrow J(P)$ maps an order ideal $I$ to the order ideal generated by the minimal elements of $P \setminus I$.  In other words, for all $I \in J(P)$, the maximal elements of $\Psi(I)$ and the minimal elements of $P \setminus I$ coincide.  

We may reframe this rule as governing the behavior on order ideals of local actions, which, following Striker and Williams \cite{Striker}, we refer to as \textit{toggles}.  For all $p \in P$ and $I \in J(P)$, \textit{toggling $I$ at $p$} yields the symmetric difference $I \triangle \lbrace p \rbrace$ if $I \triangle \lbrace p \rbrace \in J(P)$ and returns $I$ otherwise.  Let $t_p$ denote the action of toggling at $p$.   Then the local interpretation of the global description of $\Psi$ is that $t_p(I) = I \cup \lbrace p \rbrace$ if and only if $t_p(\Psi(I)) = I \setminus \lbrace p \rbrace$.

Note that each order ideal of $P$ is uniquely determined by its antichain of maximal elements $\lbrace p \in P : t_p(I) = I \setminus \lbrace p \rbrace \rbrace$, so we may interpret $\Psi$ as an action either on order ideals or on antichains of $P$.  (The latter is the original perspective of Fon-Der-Flaass \cite{FDF}; the advantages of the former are apparent when representation theory enters the picture.)  

Let $\mathfrak{g}$ be a complex simple Lie algebra with Weyl group $W$ and weight lattice $\Lambda$.  Let $\lambda \in \Lambda$ be dominant, and let $V^{\lambda}$ be the irreducible $\mathfrak{g}$-representation with highest weight $\lambda$.  If $W$ acts transitively on the weights of $V^{\lambda}$, we say that $V^{\lambda}$ is a \textit{minuscule $\mathfrak{g}$-representation} with \textit{minuscule weight} $\lambda$.  In this case, the restriction to $W \lambda$ of the partial order on $\Lambda$ opposite to the root order is a distributive lattice, and the poset of join-irreducible elements $P_{\lambda}$ is the \textit{minuscule poset} for $V^{\lambda}$.

The \textit{minuscule heap} for $V^{\lambda}$ appends to $P_{\lambda}$ the assignment of a simple root $\alpha(p)$ of $\mathfrak{g}$ to each element $p \in P_{\lambda}$.  The heap labeling of a minuscule poset, pioneered by Stembridge in \cite{Stembridge} and \cite{Stembridge2}, is an essential ingredient of our proofs in \cite{Rush} and in the present article.  We postpone a discussion of its significance until after the statement of our main theorems.  

\begin{df} \label{homomesy}
Let $\mathcal{S}$ be a finite set, and let $\tau \colon \mathcal{S} \rightarrow \mathcal{S}$ be an action on $\mathcal{S}$ of order $m$.  A function $f \colon \mathcal{S} \rightarrow \mathbb{R}$ \textit{exhibits homomesy} with respect to $\tau$ if there exists a constant $c \in \mathbb{R}$ such that, for all $x \in S$, the following equality holds:
\[
\frac{1}{m} \sum_{i=0}^{m-1} f(\tau^i(x)) = c.
\]

In this case, $f$ is \textit{$c$-mesic} with respect to $\tau$.  
\end{df}

\begin{thm} \label{mainorder}
Let $V$ be a minuscule $\mathfrak{g}$-representation with minuscule weight $\lambda$ and minuscule heap $P_{\lambda}$.  Let $\alpha$ be a simple root of $\mathfrak{g}$ with corresponding fundamental weight $\omega$, and let $P_{\lambda}^{\alpha} \subset P_{\lambda}$ be the set of elements of $P_{\lambda}$ labeled by $\alpha$.  Let $f^{\alpha} \colon J(P_{\lambda}) \rightarrow \mathbb{R}$ be defined by $I \mapsto |I \cap P_{\lambda}^{\alpha}|$.  Then, with respect to the Fon-Der-Flaass action, $f^{\alpha}$ is $c$-mesic with $c = 2 \frac{(\lambda, \omega)}{(\alpha,\alpha)}$.
\end{thm}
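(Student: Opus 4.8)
The plan is to transport the statement to the weight lattice via the isomorphism of \cite{Rush} and then invoke one clean fact about Coxeter elements. Recall from the construction reviewed above — and spelled out in \cite{Rush} — that the lattice isomorphism $J(P_\lambda) \cong W\lambda$ carries an order ideal $I$ to the weight $\mu(I)$ characterized by $\lambda - \mu(I) = \sum_{p \in I} \alpha(p)$, the sum of the heap labels of the elements of $I$. Grouping this sum by label gives $\lambda - \mu(I) = \sum_\beta |I \cap P_\lambda^\beta|\,\beta$, the sum ranging over simple roots $\beta$; applying the linear functional $v \mapsto \tfrac{2}{(\alpha,\alpha)}(v,\omega)$ and using that $\tfrac{2}{(\alpha,\alpha)}(\beta,\omega) = \delta_{\alpha\beta}$ (immediate from $(\omega,\beta) = \tfrac12(\beta,\beta)\delta_{\alpha\beta}$) collapses all but one term, yielding
\[
f^\alpha(I) \;=\; |I \cap P_\lambda^\alpha| \;=\; \frac{2}{(\alpha,\alpha)}\bigl(\lambda - \mu(I),\, \omega\bigr).
\]

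The second ingredient I would import from \cite{Rush} is its "anatomy of the lattice isomorphism": under the same identification, the Fon-Der-Flaass action $\Psi$ is intertwined with the action on $W\lambda$ of a Coxeter element $\gamma \in W$. In particular the $\Psi$-orbit of $I$ corresponds $\gamma$-equivariantly to the $\gamma$-orbit $O \subseteq W\lambda$ of $\mu(I)$, with the same cardinality $m$, so that $\{\mu(\Psi^i I) : 0 \le i < m\} = O$. The crux is then the observation that $\sum_{\nu \in O}\nu = 0$: since $\gamma$ permutes $O$, this vector sum lies in the fixed space of $\gamma$ acting on the real span of the weights, but a Coxeter element has no nonzero fixed vector on the reflection representation — its eigenvalues are $e^{2\pi i m_j/h}$, where $h$ is the Coxeter number and $m_1, \dots, m_\ell \in \{1, \dots, h-1\}$ are the exponents, none divisible by $h$ — so the sum vanishes. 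I expect this to be the point where the representation theory earns its keep; the rest is bookkeeping.

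Assembling the pieces, for any $I$ with $\mu(I) \in O$, linearity of the functional above gives
\[
\frac1m \sum_{i=0}^{m-1} f^\alpha(\Psi^i I)
\;=\; \frac{2}{(\alpha,\alpha)}\Bigl(\lambda - \frac1m\sum_{\nu \in O}\nu,\; \omega\Bigr)
\;=\; \frac{2}{(\alpha,\alpha)}(\lambda,\omega)
\;=\; 2\frac{(\lambda,\omega)}{(\alpha,\alpha)},
\]
which is the asserted value of $c$, visibly independent of the orbit.

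The genuine obstacle is not the displayed computation but pinning down the precise shape in which \cite{Rush} supplies the two imported facts: confirming that $\Psi$ corresponds to an honest, well-defined Coxeter element (and not merely to something conjugate to one), and that the heap-label identity $\lambda - \mu(I) = \sum_{p\in I}\alpha(p)$ holds on the nose under the sign conventions in force. Reassuringly, the argument is robust to the first concern, since $\sum_{\nu\in O}\nu = 0$ for \emph{any} element of $W$ with trivial fixed space on the reflection representation, and every conjugate of a Coxeter element qualifies; one should also check that $f^\alpha$ is real-valued and that the orbit, the weights, and the form all live over $\mathbb{R}$, so that the vanishing of a $\gamma$-fixed vector may legitimately be invoked.
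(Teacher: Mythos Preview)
Your argument is correct, but it takes a genuinely different route from the paper's. You share the opening identity $f^\alpha(I) = \tfrac{2}{(\alpha,\alpha)}(\lambda-\mu(I),\omega)$ (the paper's Lemma~3.4), but from there you diverge: you import from \cite{Rush} the stronger statement that $\Psi$ is carried under $\phi$ to an honest Coxeter element $\gamma$, and conclude $\sum_{\nu\in O}\nu=0$ from the eigenvalue structure of $\gamma$. The paper, by contrast, never mentions Coxeter elements. It imports only the toggle--reflection equivariance (Lemma~1.5), proves from it the local statement that $(\phi(I),\alpha_j^\vee)=1$ iff $(\phi(\Psi I),\alpha_j^\vee)=-1$ (Lemma~3.1), and then---after a change-of-basis device (Lemma~3.2) expressing $(\cdot,\omega_i)$ as a linear combination of the $(\cdot,\alpha_j^\vee)$---pairs the $+1$'s with $-1$'s around the orbit to get $\sum_k(\phi(\Psi^k I),\alpha_j^\vee)=0$ for every $j$. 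Your approach is shorter and more conceptual, and it bypasses the change-of-basis step entirely since you get the full vector identity $\sum_{\nu\in O}\nu=0$ at once; the paper's approach, on the other hand, is more self-contained (it needs less of \cite{Rush}), and its Lemma~3.1 is reused as the workhorse in the proof of the antichain-cardinality theorem as well. One small slip to tidy: you let $m$ denote both the order of $\Psi$ and the size of the particular orbit $O$; these can differ, though the discrepancy is harmless since the average over $0\le i<m$ equals the average over $O$ in either reading.
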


\begin{cor} \label{cororder}
Let $V$ be a minuscule $\mathfrak{g}$-representation with minuscule weight $\lambda$ and minuscule heap $P_{\lambda}$.  Suppose that $\mathfrak{g}$ is simply laced.  Let $\rho$ denote the half-sum of the positive roots of $\mathfrak{g}$, and let $\Omega$ denote the common length of the roots.  Let $f \colon J(P_{\lambda}) \rightarrow \mathbb{R}$ be defined by $I \mapsto |I|$.  Then, with respect to the Fon-Der-Flaass action, $f$ is $c$-mesic with $c = 2 \frac{(\lambda, \rho)}{\Omega^2}$.  
\end{cor}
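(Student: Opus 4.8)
The plan is to deduce Corollary~\ref{cororder} directly from Theorem~\ref{mainorder} by decomposing the cardinality statistic along the heap labeling. First I would note that, since the minuscule heap assigns to each element $p \in P_{\lambda}$ exactly one simple root $\alpha(p)$ of $\mathfrak{g}$, the subsets $P_{\lambda}^{\alpha}$, as $\alpha$ ranges over the simple roots, partition $P_{\lambda}$. Hence, for every order ideal $I \in J(P_{\lambda})$,
\[
|I| = \sum_{\alpha} |I \cap P_{\lambda}^{\alpha}| = \sum_{\alpha} f^{\alpha}(I),
\]
so that $f = \sum_{\alpha} f^{\alpha}$ as functions on $J(P_{\lambda})$.

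Next I would invoke the elementary fact that homomesy is additive: if $f_1, \dots, f_k \colon \mathcal{S} \to \mathbb{R}$ are $c_1$-mesic, $\dots$, $c_k$-mesic, respectively, with respect to a common action $\tau$ of order $m$, then $f_1 + \dots + f_k$ is $(c_1 + \dots + c_k)$-mesic, since for each fixed $x$ the orbit-averaging functional $g \mapsto \tfrac{1}{m}\sum_{i=0}^{m-1} g(\tau^i(x))$ is linear in $g$. Combined with the decomposition above and Theorem~\ref{mainorder}, this shows that $f$ is $c$-mesic with
\[
c = \sum_{\alpha} 2\,\frac{(\lambda, \omega_{\alpha})}{(\alpha, \alpha)},
\]
where $\omega_{\alpha}$ denotes the fundamental weight associated to $\alpha$.

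It then remains only to simplify $c$. Since $\mathfrak{g}$ is simply laced, every simple root satisfies $(\alpha, \alpha) = \Omega^2$, so bilinearity of the form gives $c = \tfrac{2}{\Omega^2}\bigl(\lambda, \sum_{\alpha} \omega_{\alpha}\bigr)$. Applying the classical identity $\sum_{\alpha} \omega_{\alpha} = \rho$ --- the sum of the fundamental weights equals the half-sum of the positive roots --- yields $c = 2\tfrac{(\lambda, \rho)}{\Omega^2}$, as claimed.

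There is essentially no obstacle internal to this deduction: the entire substance resides in Theorem~\ref{mainorder}, and what remains is bookkeeping. The only points demanding (minimal) care are that the heap labeling is single-valued, so that the sets $P_{\lambda}^{\alpha}$ are genuinely disjoint and cover $P_{\lambda}$, and the standard Lie-theoretic identity $\sum_{\alpha} \omega_{\alpha} = \rho$; neither presents any difficulty.
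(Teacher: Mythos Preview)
Your deduction is correct and is precisely the argument the paper intends: the corollary is stated without a separate proof, and the intended derivation is exactly to sum the statistics $f^{\alpha}$ over all simple roots $\alpha$, invoke Theorem~\ref{mainorder} for each summand, use that $(\alpha,\alpha)=\Omega^2$ in the simply laced case, and apply $\sum_{\alpha}\omega_{\alpha}=\rho$. There is nothing to add.
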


\begin{thm} \label{mainanti}
Let $V$ be a minuscule $\mathfrak{g}$-representation with minuscule weight $\lambda$ and minuscule heap $P_{\lambda}$.  Suppose that $\mathfrak{g}$ is simply laced.  Let $g \colon J(P_{\lambda}) \rightarrow \mathbb{R}$ be defined by $I \mapsto |\lbrace p \in P_{\lambda} : t_p(I) = I \setminus \lbrace p \rbrace \rbrace|$.  Then, with respect to the Fon-Der-Flaass action, $g$ is $c$-mesic with $c = 2 \frac{(\lambda, \lambda)}{\Omega^2}$.  
\end{thm}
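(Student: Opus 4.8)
The plan is to transport the whole question to the weight lattice via the identification of $J(P_\lambda)$ with the weight set $W\lambda$ of $V$ --- sending an order ideal $I$ to $\mu_I = \lambda - \sum_{p \in I}\alpha(p)$ --- and then to reduce the claim to a computation with coroots that is governed by how $\Psi$ moves weights.

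First I would rewrite $g$ in weight-theoretic terms. The order ideal $I$ has a maximal element labelled by a simple root $\alpha$ precisely when $I \setminus \{p\}$ is still an order ideal for some $p \in P_\lambda^\alpha$; since $V$ is minuscule every $\alpha$-string of weights has length at most two, so this happens exactly when $\langle \mu_I, \alpha^\vee \rangle = -1$, and then $p$ is unique. Hence $g(I) = \#\{\alpha : \langle \mu_I, \alpha^\vee \rangle = -1\}$, and dually the minimal elements of $P_\lambda \setminus I$ labelled by $\alpha$ are counted by $\#\{\alpha : \langle \mu_I, \alpha^\vee \rangle = 1\}$. Because the maximal elements of $\Psi(I)$ are, by definition of $\Psi$, the minimal elements of $P_\lambda \setminus I$, this says $(g\circ\Psi)(I) = \#\{\alpha : \langle\mu_I,\alpha^\vee\rangle = 1\}$. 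Therefore $g - g\circ\Psi$ has vanishing sum on every $\Psi$-orbit, so it suffices to show $g + g\circ\Psi$ is $4\frac{(\lambda,\lambda)}{\Omega^2}$-mesic; the advantage is the clean identity
\[
(g + g\circ\Psi)(I) \;=\; \#\{\alpha : \langle\mu_I,\alpha^\vee\rangle \neq 0\} \;=\; \sum_\alpha \langle\mu_I,\alpha^\vee\rangle^2,
\]
which holds because each $\langle\mu_I,\alpha^\vee\rangle$ lies in $\{-1,0,1\}$.

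The heart of the matter is to evaluate $\sum_{I\in\mathcal{O}}\sum_\alpha\langle\mu_I,\alpha^\vee\rangle^2$ for a $\Psi$-orbit $\mathcal{O}$. Here I would call on the description, built on \cite{Rush}, of $\Psi$ on $J(P_\lambda) \cong W\lambda$ through promotion: at the level of orbits it coincides with the action on $W\lambda$ of a bipartite Coxeter element $c$ of $W$, which has order the Coxeter number $h$, acts without fixed roots, and --- classically --- has the signed simple coroots as a complete set of representatives for its orbits on $\Phi^\vee$. Writing $\mathcal{O}$ as the $\langle c\rangle$-orbit of a weight $\mu_0$ and setting $m = |\mathcal{O}|$ (so $m \mid h$), the identity $\langle c^k\mu_0,\alpha^\vee\rangle = \langle\mu_0, c^{-k}\alpha^\vee\rangle$ together with the $m$-periodicity of $k \mapsto \langle c^k\mu_0,\alpha^\vee\rangle$ yields
\[
\sum_{I\in\mathcal{O}}\sum_\alpha\langle\mu_I,\alpha^\vee\rangle^2 \;=\; \frac{m}{h}\sum_\alpha\sum_{\gamma^\vee\in\langle c\rangle\alpha^\vee}\langle\mu_0,\gamma^\vee\rangle^2.
\]
Since $\langle\mu_0,\gamma^\vee\rangle^2$ depends only on $\pm\gamma^\vee$, the orbit-representative property of the simple coroots collapses the inner double sum to the $W$-invariant quantity $\sum_{\gamma^\vee\in\Phi^\vee}\langle\mu_0,\gamma^\vee\rangle^2$.

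Finally, all $W$-invariant quadratic forms on the reflection representation are proportional, and a trace computation (using that $\mathfrak{g}$ is simply laced, so all roots have squared length $\Omega^2$ and $|\Phi| = nh$ for $n$ the rank) gives $\sum_{\gamma^\vee\in\Phi^\vee}\langle\mu_0,\gamma^\vee\rangle^2 = \frac{4h}{\Omega^2}(\mu_0,\mu_0)$, while $(\mu_0,\mu_0) = (\lambda,\lambda)$ because all weights of a minuscule representation are $W$-conjugate. Combining these, $\sum_{I\in\mathcal{O}}(g + g\circ\Psi)(I) = \frac{m}{h}\cdot\frac{4h}{\Omega^2}(\lambda,\lambda) = m\cdot 4\frac{(\lambda,\lambda)}{\Omega^2}$, so $g + g\circ\Psi$ is $4\frac{(\lambda,\lambda)}{\Omega^2}$-mesic and hence $g$ is $2\frac{(\lambda,\lambda)}{\Omega^2}$-mesic. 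The step I expect to be the main obstacle is the passage in which the $c$-orbits of the simple coroots are reassembled into $\Phi^\vee$: this relies on using precisely the Coxeter element delivered by the promotion description of \cite{Rush} (a generic Coxeter element need not have the simple coroots as orbit representatives), and on checking that the orbit-level identification of $\Psi$ with $c$ is valid uniformly across the three infinite families and the two exceptional minuscule posets --- which is exactly the point at which the analysis of the lattice isomorphism carried out in \cite{Rush} must be brought to bear.
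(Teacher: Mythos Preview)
Your reduction through $(g+g\circ\Psi)(I)=\sum_i(\mu_I,\alpha_i^\vee)^2$ is correct, but the passage to Coxeter-element orbits has a real gap. You write $\phi(\mathcal{O})$ as a $\langle c\rangle$-orbit on $W\lambda$, citing the promotion description of \cite{Rush}. What that paper establishes (Theorem~\ref{rushshi} here) is that $\phi$ intertwines each \emph{label}-toggle $t_i=\prod_{p\in P_\lambda^i}t_p$ with $s_i$; consequently $\phi$ intertwines a product $t_{i_1}\cdots t_{i_t}$ of label-toggles with a Coxeter element. But $\Psi$ is the product of \emph{single}-element toggles along a linear extension of the whole poset, not a product of label-toggles, and $\phi\Psi\phi^{-1}$ is in general not even the restriction of a linear map on $\mathfrak{h}_{\mathbb{R}}^*$. (For $P_\lambda=[2]\times[2]$ in type $A_3$: $\phi\Psi\phi^{-1}$ sends $e_3+e_4\mapsto e_2+e_4$ and $e_1+e_2\mapsto e_3+e_4$, while $(e_3+e_4)+(e_1+e_2)\equiv 0$ in $\Lambda$.) The cyclic-sieving theorem of \cite{Rush} rests on the Striker--Williams fact that $\Psi$ is \emph{conjugate in the toggle group} to a product of label-toggles; the conjugating bijection does not commute with $\phi$, so knowing the $c$-orbit structure on $W\lambda$ does not by itself identify which weights lie in $\phi(\mathcal{O})$. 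You correctly flag this as the main obstacle, and it is genuine: to make your argument run you would need an independent proof that $\phi$ carries each $\Psi$-orbit to a $c$-orbit \emph{as a set}, and nothing in \cite{Rush} delivers that uniformly.

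The paper bypasses Coxeter elements entirely with a change of bilinear expression. In place of $(\mu,\alpha_i^\vee)^2$ it works with $(\mu,\alpha_i^\vee)(\mu,\omega_i)$: by Lemma~\ref{plusminus} the indices $k$ with $(\phi(\Psi^kI),\alpha_i^\vee)=1$ biject via $k\mapsto k+1$ onto those with value $-1$, and each such pair contributes exactly $1$ to the orbit sum $\sum_k 2(\phi(\Psi^kI),\alpha_i^\vee)(\phi(\Psi^kI),\omega_i)/(\alpha_i,\alpha_i)$, which therefore equals $\sum_k g^i(\Psi^kI)$. Summing over $i$ and applying the duality identity $\sum_i(\mu,\alpha_i^\vee)\omega_i=\mu$ (Lemma~\ref{ident}) collapses everything to $\sum_k 2(\phi(\Psi^kI),\phi(\Psi^kI))/\Omega^2 = 2m(\lambda,\lambda)/\Omega^2$, since every weight in $W\lambda$ has the same norm. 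No root-system orbit analysis, no trace identity, no case check --- just the biorthogonality of $\{\alpha_i^\vee\}$ and $\{\omega_i\}$.
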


\begin{rem}
Given a minuscule poset $P$, there exists a simply laced $\mathfrak{g}$ for which $P$ arises as the minuscule poset of a minuscule $\mathfrak{g}$-representation.  In other words, the multiply laced Lie algebras yield redundant minuscule posets.  Hence Corollary~\ref{cororder} and Theorem~\ref{mainanti} imply that order ideal cardinality and antichain cardinality, respectively, are homomesic with respect to $\Psi$ in all minuscule posets.  
\end{rem}

By construction, the lattice of order ideals $J(P_{\lambda})$ and the weight lattice $W \lambda$ are isomorphic as posets.  The primary purpose of the heap labeling of $P_{\lambda}$ is to identify each covering relation in $P_{\lambda}$ with a counterpart in $W \lambda$ and thereby generate an isomorphism explicitly.  If $I \lessdot I'$ is a covering relation in $J(P_{\lambda})$, then there exists an element $p \in P_{\lambda}$ such that $I' \setminus I = \lbrace p \rbrace$, and toggling at $p$ transitions back and forth between $I$ and $I'$.  Similarly, if $\mu \lessdot \mu'$ is a covering relation in $W \lambda$, then there exists a simple root $\alpha \in \Lambda$ such that $\mu - \mu' = \alpha$, and the simple reflection $s_{\alpha} \in W$ interchanges $\mu$ and $\mu'$.  The heap label of each element $p \in P_{\lambda}$ is the simple root $\alpha(p)$ that stipulates the simple reflection to correspond to toggling at $p$.  

The association of simple reflections to toggles induces a bijection between saturated chains of $J(P_{\lambda})$ and $W \lambda$ that results in an isomorphism between the two lattices.  To wit, if $I$ is an order ideal of $P_{\lambda}$, and $(p_1, p_2, \ldots, p_{\ell})$ is a linear extension of $I$, then $I = t_{p_{\ell}} t_{p_{\ell-1}} \cdots t_{p_1} (\varnothing)$, and we define \[\phi(I) := s_{\alpha(p_{\ell})} s_{\alpha(p_{\ell -1 })} \cdots s_{\alpha(p_1)} (\lambda).\]  

That $\phi \colon J(P_{\lambda}) \rightarrow W \lambda$ is a well-defined isomorphism is due to Stembridge \cite{Stembridge}.  In Rush--Shi \cite{Rush}, it is shown that the heap labels $\lbrace \alpha(p) \rbrace_{p \in P}$ indeed indicate the corresponding covering relations, for the action of each simple reflection $s_{\alpha}$ is parallel under $\phi$ to that of the sequence of toggles at all elements of $P_{\lambda}$ to which the label $\alpha$ is affixed.  

\begin{lem}[Rush--Shi \cite{Rush}] \label{equiviso}
Let $V$ be a minuscule $\mathfrak{g}$-representation with minuscule weight $\lambda$, and let $P_{\lambda}$ be the minuscule heap for $V$.  Let $\alpha$ be a simple root of $\mathfrak{g}$.  Let $t_{\alpha} := \prod_{p \in P_{\lambda}^{\alpha}} t_p$.  Then the following diagram is commutative.  
\[\renewcommand{\arraystretch}{1.0}
\begin{array}[c]{ccc}
J(P_{\lambda}) & \stackrel{\phi}{\rightarrow} & W \lambda \\
\downarrow \scriptstyle{t_{\alpha}} && \downarrow \scriptstyle{s_{\alpha}} \\
J(P_{\lambda}) & \stackrel{\phi}{\rightarrow} & W \lambda
\end{array}
\]
\end{lem}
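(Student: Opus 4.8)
The plan is to verify commutativity of the diagram pointwise: fixing $I \in J(P_{\lambda})$ and setting $\mu := \phi(I) \in W\lambda$, I would show $\phi(t_{\alpha}(I)) = s_{\alpha}(\mu)$. The preliminary step is to extract the combinatorics of $t_{\alpha}$ from the fact that $P_{\lambda}$ is the heap of a reduced word of a minuscule element. Two such facts are needed. First, the elements carrying the label $\alpha$ form a chain $q_{1} < q_{2} < \cdots < q_{k}$ in $P_{\lambda}$. Second, between consecutive members $q_{i}, q_{i+1}$ of this chain there always sits some $r \in P_{\lambda}$ with $q_{i} < r < q_{i+1}$; otherwise the generator $s_{\alpha}$ could be brought into two successive positions of the reduced word by commutation moves, contradicting reducedness. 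It follows that no two elements of $P_{\lambda}^{\alpha}$ form a covering relation in $P_{\lambda}$, so the toggles $\{t_{p} : p \in P_{\lambda}^{\alpha}\}$ pairwise commute, $t_{\alpha}$ is well defined regardless of the order of composition, and $t_{\alpha}$ is an involution.

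Next I would argue that for a given $I$ at most one of these toggles acts nontrivially. Writing $I \cap P_{\lambda}^{\alpha} = \{q_{1}, \ldots, q_{j}\}$ --- an order ideal of the chain $q_{1} < \cdots < q_{k}$ --- toggling at $q_{i}$ can only delete $q_{i}$ when $i = j$ and $q_{j}$ is maximal in $I$, and can only adjoin $q_{i}$ when $i = j + 1$ and every element of $P_{\lambda}$ below $q_{j+1}$ belongs to $I$; the intermediate element $r$ with $q_{j} < r < q_{j+1}$ then precludes both possibilities occurring at once. This splits into three cases. If some $t_{q}$ adjoins $q$ (so $q = q_{j+1} \in P_{\lambda}^{\alpha}$), then appending $q$ to a linear extension of $I$ and invoking the definition of $\phi$ gives $\phi(t_{\alpha}(I)) = \phi(I \cup \{q\}) = s_{\alpha(q)}(\mu) = s_{\alpha}(\mu)$. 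If some $t_{q}$ deletes $q$ (so $q = q_{j} \in P_{\lambda}^{\alpha}$ is maximal in $I$), then a linear extension of $I$ terminating at $q$ gives, via the definition of $\phi$, the relation $\mu = \phi(I) = s_{\alpha}(\phi(t_{\alpha}(I)))$, and applying the involution $s_{\alpha}$ yields $\phi(t_{\alpha}(I)) = s_{\alpha}(\mu)$.

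The remaining case --- no toggle labeled $\alpha$ moves $I$, so $t_{\alpha}(I) = I$ --- is the heart of the matter: here one must show $s_{\alpha}(\mu) = \mu$, i.e. $\langle \mu, \alpha^{\vee} \rangle = 0$. Since $V$ is minuscule, $\langle \mu, \alpha^{\vee} \rangle \in \{-1, 0, 1\}$, so it suffices to exclude the values $\pm 1$. Suppose $\langle \mu, \alpha^{\vee} \rangle = 1$. Then $s_{\alpha}(\mu) = \mu - \alpha$ is a weight of $V$, and $\mu \lessdot \mu - \alpha$ is a covering relation in $W\lambda$, since a simple root admits no expression as a sum of two nonzero sums of positive roots and hence no weight lies strictly between them. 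Pulling this covering relation back through the isomorphism $\phi$ produces an element $q$ with $\phi(I \cup \{q\}) = \mu - \alpha$; comparing this with $\phi(I \cup \{q\}) = s_{\alpha(q)}(\mu) = \mu - \langle \mu, \alpha(q)^{\vee} \rangle\, \alpha(q)$ and using linear independence of distinct simple roots forces $\alpha(q) = \alpha$, so $q \in P_{\lambda}^{\alpha}$ is available to be adjoined to $I$ --- contradicting the hypothesis of the case. The value $\langle \mu, \alpha^{\vee} \rangle = -1$ is excluded symmetrically, using the covering relation $\mu + \alpha \lessdot \mu$ and a linear extension of $I$ ending at the element to be deleted. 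This settles all three cases.

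I expect the last case to be the principal obstacle, as it is the only point at which the argument must cross between the combinatorial statement that no toggle labeled $\alpha$ is available and the representation-theoretic statement that $\langle \mu, \alpha^{\vee} \rangle$ vanishes; the first two cases are routine manipulations with linear extensions, and the structural properties of the chain $P_{\lambda}^{\alpha}$ used in the preliminary step are standard features of heaps of reduced words.
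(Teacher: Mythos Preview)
The paper does not supply a proof of this lemma; it is quoted from Rush--Shi \cite{Rush} (restated later as Theorem~\ref{rushshi} with the citation ``Theorem 6.3''), so there is no in-paper argument to compare against. Your proposal stands on its own, and it is correct.

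A few remarks on your write-up. The structural facts you invoke about $P_{\lambda}^{\alpha}$ --- that it is a chain, and that consecutive members are separated by an intermediate element --- are exactly the heap-theoretic consequences of full commutativity that one needs, and your justification via reducedness is the right one. (One minor caution: the paper's Definition~\ref{heap} phrases the heap relation as ``$s_{i_j}$ and $s_{i_{j'}}$ do not commute,'' which taken literally would not force equal-label elements to be comparable; the intended convention, following Stembridge, is that equal or Coxeter-adjacent labels yield a relation. Your argument uses the correct convention.) Your trichotomy is clean, and the analysis of the third case is the substantive part: the appeal to Proposition~\ref{bourbaki} to restrict $(\mu,\alpha^{\vee})$ to $\{-1,0,1\}$, followed by pulling back the covering relation $\mu \lessdot \mu - \alpha$ through the isomorphism $\phi$ and reading off the label of the new element from $s_{\alpha(q)}(\mu) = \mu - \alpha$, is exactly the bridge between the combinatorics and the representation theory that the statement requires. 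The argument that at most one toggle in $P_{\lambda}^{\alpha}$ acts nontrivially on a given $I$ is also sound, and is in fact implicitly relied upon elsewhere in the paper (e.g.\ in the proof of Lemma~\ref{plusminus}).
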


If $\mathfrak{g} = \mathfrak{sl}_n$, then $W \cong \mathfrak{S}_n$, and $\Lambda$ is isomorphic to a quotient of $\mathbb{Z}^n$.  Choosing the set of simple roots $\Delta := \lbrace \alpha_1, \alpha_2, \ldots, \alpha_{n-1} \rbrace$ by $\alpha_i := e_{i+1} - e_i$ for all $1 \leq i \leq n-1$, where $e_j$ denotes the image in $\Lambda$ of the $j^{\text{th}}$ standard basis vector in $\mathbb{Z}^n$ for all $1 \leq j \leq n$, we see that the fundamental weights take the form $\omega_i = e_{i+1} + e_{i+2} + \cdots + e_n$, and we observe, for dominant $\lambda \in \Lambda$, that $V^{\lambda}$ is minuscule precisely when $\lambda$ is fundamental.  

Set $\lambda := \omega_{n-i}$.  In this case, the minuscule representation is $V^{\lambda} = \wedge^{i} (\mathbb{C}^n)$; the weight lattice is $W \lambda = \lbrace e_{j_1} + e_{j_2} + \cdots + e_{j_i} : 1 \leq j_1 < j_2 < \cdots < j_i \leq n \rbrace$, and the minuscule poset is $P_{\lambda} = [n-i] \times [i]$, which may be realized in $\mathbb{Z}^2$ as the set $\lbrace (b-a, b+a): 1 \leq a \leq n-i, 1 \leq b \leq i \rbrace$, partially ordered by the relations $\lbrace (b-a, b+a) \leq (b'-a', b'+a') : a \leq a', b \leq b' \rbrace$.  For $1 \leq \ell \leq n-1$, Propp and Roby \cite {Propp}, following Striker and Williams \cite{Striker}, consider the elements with $x$-coordinate $\ell - n + i$ to belong to the $\ell^{\text{th}}$ \textit{column} of $P_{\lambda}$.  As it happens, these are exactly the elements of $P_{\lambda}$ labeled by the simple root $\alpha_{\ell}$ in the minuscule heap for $\wedge^i (\mathbb{C}^n)$.  

In \cite{Propp}, Propp and Roby undertake a systematic study of cyclic actions on finite sets for which there exist statistics that exhibit homomesy.  For their paradigmatic example of the Fon-Der-Flaass action in products of two chains, they find the following statistics on an order ideal to be homomesic: the cardinality of each of its columns, its total cardinality, and the cardinality of its corresponding antichain.  In this article, we present a generalization of these results that extends to all minuscule posets and, by subsuming the notion of ``columns'' in that of sets of identically labeled heap elements, illuminates the representation-theoretic underpinnings of the Propp--Roby choices of statistic. 

The proofs are subtle, but they are very simple.  Let $\lbrace \alpha_1, \alpha_2, \ldots, \alpha_t \rbrace$ be a choice of set of simple roots for $\mathfrak{g}$, and let $\lbrace \alpha_1^{\vee}, \alpha_2^{\vee}, \ldots, \alpha_t^{\vee} \rbrace$ be the corresponding set of simple coroots.  We obtain Theorems~\ref{mainorder} and \ref{mainanti} as codas to a dogged examination of the inner products $(\mu, \alpha_i^{\vee})$ for weights $\mu \in W \lambda$ and $1 \leq i \leq t$.  

From the relation $s_{\alpha_i}(\mu) = \mu - (\mu, \alpha_i^{\vee}) \alpha_i$, we see in view of Lemma~\ref{equiviso} that $(\mu, \alpha_i^{\vee})$ reflects the number of heap elements labeled by $\alpha_i$ that may be toggled in or out of the order ideal $I$ for which $\phi(I) = \mu$.  Thus, the local characterization of the Fon-Der-Flaass action entails that $(\phi(I), \alpha_i^{\vee})$ varies predictably as $I$ ranges over an orbit under $\Psi$.  Because the simple roots form a basis for $\Lambda$, each order ideal $I$ is uniquely determined by the values $(\phi(I), \alpha_i^{\vee})$ for $1 \leq i \leq t$.  To deduce Theorem~\ref{mainorder}, all we require is a systematic method to extract the cardinalities $f^{\alpha_i}(I)$ from these inner products, which we herein contrive.  The computations that lead to the proof of Theorem~\ref{mainanti} are similar in spirit, only more involved.  

The rest of this article is organized as follows.  In section 2, we review the background on minuscule posets and minuscule heaps, and we restate the Rush--Shi \cite{Rush} lemma.  In section 3, we canvass the inner products $(\mu, \alpha_i^{\vee})$ and prove Theorems~\ref{mainorder} and \ref{mainanti}.   

\section{Minuscule Posets}

In this section, we introduce the minuscule posets and their labeled incarnations, the minuscule heaps.  In accordance with the Rush--Shi \cite{Rush} lemma, the labeling of a minuscule heap encapsulates the relationship between the covering relations in its lattice of order ideals and those in the corresponding weight lattice.  We start with basics on Lie algebras, root systems, and weights, following Kirillov \cite{Kirillov}.

Let $\mathfrak{g}$ be a complex simple Lie algebra, and let $\mathfrak{h}$ be a choice of Cartan subalgebra.  Note that the restriction to $\mathfrak{h}$ of the Killing form on $\mathfrak{g}$ is non-degenerate (cf. Kirillov \cite{Kirillov}, Theorem 6.38), so it induces a symmetric bilinear form on $\mathfrak{h}^*$, which we denote by $(\cdot, \cdot)$.  

Let $R \subset \mathfrak{h}^*$ be the roots of $\mathfrak{g}$.  Then $R$ spans $\mathfrak{h}^*$ as a complex vector space (cf. Kirillov \cite{Kirillov}, Theorem 6.44), and we denote by $\mathfrak{h}_{\mathbb{R}}^* \subset \mathfrak{h}^*$ the real vector space generated by $R$.  The restriction of $(\cdot, \cdot)$ to $\mathfrak{h}_{\mathbb{R}}^*$ is positive-definite (cf. Kirillov \cite{Kirillov}, Theorem 6.45), so $(\cdot, \cdot)$ is an inner product on $\mathfrak{h}_{\mathbb{R}}^*$.  From Theorem 7.3 of Kirillov \cite{Kirillov}, it follows that $R$ is a reduced root system for the vector space $\mathfrak{h}_{\mathbb{R}}^*$ equipped with the inner product $(\cdot, \cdot)$.  

For all $\alpha \in R$, let $\alpha^{\vee} := 2 \frac{\alpha}{(\alpha,\alpha)}$ be the coroot associated to $\alpha$.  Let $\Pi = \lbrace \alpha_1, \alpha_2, \ldots, \alpha_t \rbrace$ be a choice of set of simple roots for $R$, and write $\Pi^{\vee} := \lbrace \alpha_1^{\vee}, \alpha_2^{\vee}, \ldots, \alpha_t^{\vee} \rbrace$ for the corresponding set of simple coroots.  

\begin{prop}[Kirillov \cite{Kirillov}, Theorem 7.16] \label{rootbasis}
The set of simple roots $\Pi$ constitutes a basis for $\mathfrak{h}_{\mathbb{R}}^*$.  
\end{prop}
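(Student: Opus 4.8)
The plan is to derive the statement from two standard structural features of the reduced root system $R$ together with the defining property of a choice of simple roots. Recall that $\Pi$ comes equipped with a set of positive roots: there is a partition $R = R^+ \sqcup (-R^+)$ in which every element of $R^+$ is a nonnegative integer combination of $\alpha_1, \ldots, \alpha_t$, and the $\alpha_i$ are exactly the indecomposable elements of $R^+$ (those that are not a sum of two positive roots). I would begin by recording this, since it immediately yields that $\Pi$ spans $\mathfrak{h}_{\mathbb{R}}^*$: by construction $R$ spans $\mathfrak{h}_{\mathbb{R}}^*$, and every root is $\pm$ a nonnegative combination of simple roots, so the $\alpha_i$ span the same space.

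The real content is therefore linear independence, and the key geometric input is that distinct simple roots are pairwise non-acute, i.e.\ $(\alpha_i, \alpha_j) \leq 0$ whenever $i \neq j$. I would prove this by contradiction: if $(\alpha_i, \alpha_j) > 0$ for some $i \neq j$, then, invoking the elementary root-system fact that $(\alpha, \beta) > 0$ with $\alpha \neq \pm \beta$ forces $\alpha - \beta \in R$, the vector $\alpha_i - \alpha_j$ is a root; whichever of $\alpha_i - \alpha_j$ or $\alpha_j - \alpha_i$ lies in $R^+$ exhibits $\alpha_i$ or $\alpha_j$ as a sum of two positive roots, contradicting indecomposability.

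With non-acuteness established, linear independence follows from the usual half-space argument. Suppose $\sum_i c_i \alpha_i = 0$, and set $v := \sum_{c_i > 0} c_i \alpha_i = \sum_{c_j < 0} (-c_j) \alpha_j$. Expanding $(v, v)$ by pairing the first expression against the second produces a sum of terms $c_i (-c_j)(\alpha_i, \alpha_j)$ with $c_i > 0$, $-c_j > 0$, and $(\alpha_i, \alpha_j) \leq 0$ (the two index sets are disjoint, so $i \neq j$); hence $(v, v) \leq 0$, forcing $v = 0$. Pairing $v = \sum_{c_i > 0} c_i \alpha_i$ with the regular element $\gamma$ that cuts out $R^+$, for which $(\alpha_i, \gamma) > 0$ for every simple root, gives $0 = (v, \gamma) = \sum_{c_i > 0} c_i (\alpha_i, \gamma)$, a sum of strictly positive terms; so there are no positive $c_i$, and symmetrically no negative $c_i$. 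Thus all $c_i$ vanish, and $\Pi$ is a linearly independent spanning set, hence a basis.

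The obstacle here is not conceptual but one of prerequisites: the argument rests on the basic properties of reduced root systems — the decomposition of $R$ into $\pm$ nonnegative combinations of simple roots, the indecomposability of simple roots, and the implication $(\alpha, \beta) > 0 \Rightarrow \alpha - \beta \in R$. These are precisely the facts developed in the sections of Kirillov cited above (and the statement is itself Kirillov, Theorem 7.16), so in the write-up I would quote them directly rather than reprove the $\mathfrak{sl}_2$-triple and root-string machinery underlying the last of them.
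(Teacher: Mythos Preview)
Your argument is correct and is the standard textbook proof. Note, however, that the paper does not supply its own proof of this proposition at all: it is stated with attribution to Kirillov, Theorem 7.16, and left without proof. So there is no in-paper argument to compare against; your write-up simply fills in the cited reference, and the non-acuteness plus half-space argument you give is precisely the classical one found there (and in Bourbaki, Humphreys, etc.).
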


\begin{cor} \label{cobasis}
The set of simple coroots $\Pi^{\vee}$ constitutes a basis for $\mathfrak{h}_{\mathbb{R}}^*$.  
\end{cor}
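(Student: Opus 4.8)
The plan is to deduce this immediately from Proposition~\ref{rootbasis} by observing that the passage from simple roots to simple coroots is nothing more than a diagonal change of basis. First I would note that, by the positive-definiteness of $(\cdot,\cdot)$ on $\mathfrak{h}_{\mathbb{R}}^*$ recorded above, and since each simple root $\alpha_i$ is a nonzero element of $\mathfrak{h}_{\mathbb{R}}^*$, the quantity $(\alpha_i, \alpha_i)$ is a strictly positive real number. Hence the scalar $c_i := 2/(\alpha_i,\alpha_i)$ is well defined and nonzero, and by the very definition of the coroot we have $\alpha_i^{\vee} = c_i \alpha_i$ for each $1 \leq i \leq t$.

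The key step is then the elementary linear-algebra fact that rescaling the members of a basis by nonzero scalars again produces a basis. Concretely, the matrix expressing the tuple $(\alpha_1^{\vee}, \alpha_2^{\vee}, \ldots, \alpha_t^{\vee})$ in terms of the tuple $(\alpha_1, \alpha_2, \ldots, \alpha_t)$ is $\mathrm{diag}(c_1, c_2, \ldots, c_t)$, which is invertible precisely because every $c_i$ is nonzero. Composing this invertible linear map with the coordinate isomorphism $\mathbb{R}^t \xrightarrow{\sim} \mathfrak{h}_{\mathbb{R}}^*$ supplied by Proposition~\ref{rootbasis} shows that $\Pi^{\vee}$ is simultaneously spanning and linearly independent in $\mathfrak{h}_{\mathbb{R}}^*$, i.e., that it is a basis.

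There is no genuine obstacle here: the argument is routine once Proposition~\ref{rootbasis} is in hand. The only point that warrants attention is the non-vanishing of $(\alpha_i, \alpha_i)$, and this is exactly why it was worth recording above that the restriction of $(\cdot,\cdot)$ to $\mathfrak{h}_{\mathbb{R}}^*$ is positive-definite; without that, the coroots would not even be defined.
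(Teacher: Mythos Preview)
Your proof is correct and matches the paper's treatment: the paper states this as a corollary of Proposition~\ref{rootbasis} without further argument, leaving implicit exactly the observation you make explicit, namely that $\alpha_i^{\vee}$ is a nonzero scalar multiple of $\alpha_i$ (well-defined because $(\cdot,\cdot)$ is positive-definite on $\mathfrak{h}_{\mathbb{R}}^*$), so the coroots are a rescaling of a basis and hence themselves a basis.
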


We refer to the lattices $\Phi$ and $\Phi^{\vee}$ in $\mathfrak{h}_{\mathbb{R}}^*$ generated over $\mathbb{Z}$ by $\Pi$ and $\Pi^{\vee}$ as the \textit{root} and \textit{coroot lattices}, respectively.  The \textit{weight lattice} $\Lambda$, which comprises the weights of $\mathfrak{g}$, is the dual lattice to the coroot lattice.  

\begin{df} \label{weight}
A functional $\lambda \in \mathfrak{h}_{\mathbb{R}}^*$ is a \textit{weight} of $\mathfrak{g}$ if $(\lambda, \alpha_i^{\vee}) \in \mathbb{Z}$ for all $1 \leq i \leq t$.  
\end{df}

\begin{df} \label{dominant}
A weight $\lambda \in \Lambda$ is \textit{dominant} if $(\lambda, \alpha_i^{\vee}) \geq 0$ for all $1 \leq i \leq t$.  
\end{df}

We write $\Lambda^+ \subset \Lambda$ for the subset of dominant weights.  Since the weight lattice is dual to $\Phi^{\vee}$, the biorthogonal basis to $\Pi^{\vee}$ is a distinguished subset of $\Lambda^+$ that forms a $\mathbb{Z}$-basis for $\Lambda$.  We call the weights in this basis \textit{fundamental}.  

\begin{df} \label{fundamental}
The \textit{fundamental weights} $\omega_1, \omega_2, \ldots, \omega_t$ are defined by the relations $(\omega_i, \alpha_j^{\vee}) = \delta_{ij}$ for $1 \leq i, j \leq t$, where $\delta_{ij}$ denotes the Kronecker delta.     
\end{df}

The dominant weights in $\Lambda$ index the finite-dimensional irreducible representations of $\mathfrak{g}$.  

\begin{thm}[Kirillov \cite{Kirillov}, Corollary 8.24] \label{weightrep}
For all $\lambda \in \Lambda^+$, there exists a finite-dimensional irreducible representation $V^{\lambda}$ of $\mathfrak{g}$ with highest weight $\lambda$.  Furthermore, the map $\lambda \mapsto [V^{\lambda}]$ defines a bijection between $\Lambda^+$ and the set of isomorphism classes of finite-dimensional irreducible $\mathfrak{g}$-representations.    
\end{thm}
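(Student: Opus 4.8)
The plan is to split the assertion into a uniqueness half and an existence half, organizing both around the category of highest weight modules and the representation theory of the $\mathfrak{sl}_2$-triples attached to the simple roots. For each $1 \le i \le t$, fix a root vector $e_i$ for $\alpha_i$, put $h_i := \alpha_i^{\vee}$, and let $f_i$ be the root vector for $-\alpha_i$ normalized by $[e_i, f_i] = h_i$; then $\mathfrak{sl}_2^{(i)} := \langle e_i, h_i, f_i \rangle \cong \mathfrak{sl}_2(\mathbb{C})$, and I would repeatedly restrict representations of $\mathfrak{g}$ to these subalgebras.

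For uniqueness, I would first recall that in any finite-dimensional $\mathfrak{g}$-representation $V$ the Cartan subalgebra $\mathfrak{h}$ acts semisimply (preservation of the Jordan decomposition, together with the fact that elements of $\mathfrak{h}$ are semisimple), so $V = \bigoplus_{\mu} V_{\mu}$ is a finite direct sum of weight spaces. Choosing $\mu$ maximal in the root order among the weights of $V$, any nonzero $v \in V_{\mu}$ is killed by every $e_i$; if $V$ is irreducible then $V = U(\mathfrak{g})v$, and the Poincar\'e--Birkhoff--Witt (PBW) theorem shows $V$ is spanned by images of $v$ under monomials in the $f_i$, so $\mu$ is the unique maximal weight (hence intrinsic to $V$) and $\dim V_{\mu} = 1$. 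Thus $V$ is an irreducible quotient of the Verma module $M(\mu) = U(\mathfrak{g}) \otimes_{U(\mathfrak{b})} \mathbb{C}_{\mu}$ over the Borel subalgebra $\mathfrak{b}$; since $M(\mu)$ has a unique maximal proper submodule, it has a unique irreducible quotient $L(\mu)$, and therefore any two finite-dimensional irreducibles with the same highest weight are isomorphic. This gives well-definedness and injectivity of $\lambda \mapsto [V^{\lambda}]$. For surjectivity, with $v \in V_{\mu}$ a highest weight vector, restricting $V$ to $\mathfrak{sl}_2^{(i)}$ and applying the classification of finite-dimensional $\mathfrak{sl}_2$-modules shows that the $h_i$-eigenvalue $(\mu, \alpha_i^{\vee})$ of $v$ is a nonnegative integer, so $\mu \in \Lambda^+$.

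It remains — and this I expect to be the crux — to show that for dominant $\lambda$ the irreducible highest weight module $L(\lambda)$ is finite-dimensional. Write $n_i := (\lambda, \alpha_i^{\vee}) \in \mathbb{Z}_{\ge 0}$ and let $v_{\lambda}$ be a highest weight vector of $L(\lambda)$. Using the identities $[e_j, f_i] = \delta_{ij} h_i$ and $[e_i, f_i^{\,k}] = k f_i^{\,k-1}(h_i - k + 1)$ in $U(\mathfrak{g})$, one checks that $f_i^{\,n_i + 1} v_{\lambda}$ is itself a highest weight vector, of weight $\lambda - (n_i+1)\alpha_i \ne \lambda$, hence vanishes by irreducibility; so $f_i$ acts nilpotently on $v_{\lambda}$. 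The pivotal lemma is that, for fixed $i$, the set of vectors of $L(\lambda)$ on which $f_i$ acts locally nilpotently is a $\mathfrak{g}$-submodule — proved by verifying stability under each $e_j, h_j, f_j$ through the same commutation relations — so, containing the generator $v_{\lambda}$, it equals $L(\lambda)$; a parallel and easier argument gives local nilpotence of every $e_i$. Local nilpotence of the $e_i$ and $f_i$ then permits one to exponentiate $\mathfrak{sl}_2^{(i)}$ on each finite-dimensional $e_i$-$f_i$-stable subspace, yielding an automorphism of $L(\lambda)$ that carries $L(\lambda)_{\mu}$ onto $L(\lambda)_{s_i \mu}$; hence the set of weights of $L(\lambda)$ is $W$-stable. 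Since this set lies in $\{\mu \in \Lambda : \lambda - \mu \text{ is a nonnegative integer combination of simple roots}\}$, which contains only finitely many dominant weights, $W$-stability forces it to be finite; as each weight space of a highest weight module is finite-dimensional by PBW, $\dim L(\lambda) < \infty$, and we set $V^{\lambda} := L(\lambda)$. The main obstacle is precisely the submodule lemma together with the exponentiation step, which promote ``$f_i$ nilpotent on the cyclic generator'' to a genuine Weyl-group symmetry of the weight set; once that is in hand the finiteness conclusion is immediate, and the remaining ingredients (weight space decompositions, Verma modules, and $\mathfrak{sl}_2$-theory) are standard.
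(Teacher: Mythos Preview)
Your argument is a correct and standard rendition of the highest weight classification.  Note, however, that the paper does not prove this theorem at all: it is quoted as Corollary~8.24 of Kirillov's textbook and used purely as background, with no proof given or sketched.  So there is no ``paper's approach'' to compare against; your proposal simply supplies, accurately, the content that the paper defers to the cited reference.  (For what it is worth, the outline you give --- Verma modules for uniqueness, $\mathfrak{sl}_2$-restriction for dominance of the highest weight, and local nilpotence of the $e_i$ and $f_i$ together with exponentiation to obtain $W$-invariance of the weight set and hence finite-dimensionality of $L(\lambda)$ --- is essentially the argument Kirillov himself presents.)
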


Let $V$ be a finite-dimensional irreducible representation of $\mathfrak{g}$.  For all $\mu \in \Lambda$, the weight $\mu$ of $\mathfrak{g}$ is a \textit{weight} of $V$ if the weight space associated to $\mu$, namely, \[\lbrace v \in V : hv = \mu(h)v \text{  } \forall h \in \mathfrak{h} \rbrace,\] is nonzero.  For all $\lambda \in \Lambda^+$, we write $\Lambda_{\lambda} \subset \Lambda$ for the (finite) subset comprising the weights of $V^{\lambda}$.  Note that if $\lambda$ is the unique dominant weight of $\mathfrak{g}$ for which $V \cong V^{\lambda}$, then the weights of $V$ and $V^{\lambda}$ coincide.  

For all roots $\alpha \in R$, let the \textit{reflection} associated to $\alpha$ be the orthogonal involution on $\mathfrak{h}_{\mathbb{R}}^*$ given by $\lambda \mapsto \lambda - (\lambda, \alpha^{\vee}) \alpha$.  Recall that the \textit{Weyl group} $W$ of $\mathfrak{g}$ is the subgroup of $O(\mathfrak{h}_{\mathbb{R}}^*)$ generated by the set of \textit{simple reflections} $\lbrace s_{i}\rbrace_{i=1}^t$, where $s_i := s_{\alpha_i}$ is the reflection associated to the simple root $\alpha_i$ for all $1 \leq i \leq t$.  It is easy to see that $W$ preserves $\Phi$, $\Phi^{\vee}$, and $\Lambda$.  As it happens, it preserves $\Lambda_{\lambda}$ as well, so the action of $W$ on $\Lambda$ restricts to an action on $\Lambda_{\lambda}$ (cf. Kirillov \cite{Kirillov}, Theorem 8.8).  

At last we come to the definition of a minuscule representation, which underlies that of a minuscule poset.  

\begin{df} \label{minurep}
Let $V$ be a finite-dimensional irreducible representation of $\mathfrak{g}$, and let $\lambda$ be the unique dominant weight of $\mathfrak{g}$ for which $V \cong V^{\lambda}$.  Then $V$ is \textit{minuscule} with \textit{minuscule weight} $\lambda$ if the action of $W$ on $\Lambda_{\lambda}$ is transitive.  
\end{df}

\begin{rem}
If $\lambda$ is a dominant weight of $\mathfrak{g}$, and the $\mathfrak{g}$-representation $V^{\lambda}$ is minuscule, then $\lambda$ is fundamental.  The converse holds for $\mathfrak{g} = \mathfrak{sl}_n$, but it does not hold in general.  
\end{rem}

To see how a minuscule poset arises from a minuscule representation $V$ with minuscule weight $\lambda$, we require a partial order on the weights $\Lambda_{\lambda}$.  One choice, and that taken in Rush--Shi \cite{Rush}, is the restriction to $\Lambda_{\lambda}$ of the \textit{root order} on $\Lambda$, viz., the transitive closure of the relations $\mu \lessdot \nu$ for $\mu, \nu \in \Lambda$ satisfying $\nu - \mu \in \Pi$.  Here, however, we follow Proctor \cite{Proctor}, and opt for the opposite order on $\Lambda_{\lambda}$.  The considerations that motivate us to make this change are technical, but chief among them is our conviction that the empty order ideal of the minuscule poset for $V^{\lambda}$ ought correspond to $\lambda$ in the order-preserving bijection between order ideals and weights we ultimately seek to define.  Thus, in what follows, we assume $\Lambda_{\lambda}$ is endowed with the partial order opposite to that inherited from the root order, so that $\lambda \in \Lambda_{\lambda}$ is the unique minimal weight. 

\begin{rem}
Let $w_0$ be the longest element of $W$.  Since $-w_0 \colon \Lambda \rightarrow \Lambda$ preserves $\Pi$, it follows that $\nu \lessdot \mu$ is a covering relation in $\Lambda_{\lambda}$ if and only if $w_0 \mu \lessdot w_0 \nu$ is as well.  Hence $w_0$ defines an order-reversing involution on $\Lambda_{\lambda}$ that renders the choice between the partial order we adopt and its inverse cosmetic in character.  
\end{rem}

\begin{prop}
Let $\lambda \in \Lambda^+$ such that the $\mathfrak{g}$-representation $V^{\lambda}$ is minuscule.  Then $\Lambda_{\lambda}$ is a distributive lattice.  
\end{prop}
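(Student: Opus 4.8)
The plan is to coordinatize $\Lambda_{\lambda}$ so that its partial order becomes the coordinatewise order on a set of nonnegative integer vectors, to show that this set is closed under coordinatewise meet and join, and to conclude that $\Lambda_{\lambda}$ is a sublattice of a distributive lattice and hence itself distributive. The first step is to record the basic consequence of minusculity: for every $\mu \in \Lambda_{\lambda}$ and every root $\alpha$, one has $(\mu, \alpha^{\vee}) \in \{-1, 0, 1\}$. Since $W$ acts transitively on $\Lambda_{\lambda}$, writing $\mu = w\lambda$ reduces this to showing $(\lambda, \gamma^{\vee}) \leq 1$ for all $\gamma \in R^{+}$; and if $(\lambda, \gamma^{\vee}) \geq 2$, then $\lambda - \gamma$ lies on the $\gamma$-string through the highest weight and so is a weight of $V^{\lambda}$, while $|\lambda - \gamma|^{2} = |\lambda|^{2} - \bigl((\lambda, \gamma^{\vee}) - 1\bigr)|\gamma|^{2} < |\lambda|^{2}$, contradicting $\lambda - \gamma \in W\lambda$. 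From this it follows that every covering relation of $\Lambda_{\lambda}$ is a simple-root step $\mu \lessdot \mu - \alpha_{i}$: if $\mu < \nu$ in $\Lambda_{\lambda}$ is not a covering relation, write $\mu - \nu = \sum_{i} c_{i} \alpha_{i}$ (necessarily $c_{i} \geq 0$), pick $i$ with $c_{i} > 0$ and $(\alpha_{i}, \mu - \nu) > 0$, and observe that one of $s_{i}\mu = \mu - \alpha_{i}$ or $s_{i}\nu = \nu + \alpha_{i}$ is a weight lying strictly between $\mu$ and $\nu$. Hence $\Lambda_{\lambda}$ is graded with least element $\lambda$, and, since the simple roots form a basis of $\mathfrak{h}_{\mathbb{R}}^{*}$, the relation $\mu \leq \nu$ in $\Lambda_{\lambda}$ amounts to $c_{i}(\mu) \leq c_{i}(\nu)$ for all $i$, where $\lambda - \mu = \sum_{i} c_{i}(\mu)\, \alpha_{i}$. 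Therefore $\mu \mapsto \bigl(c_{1}(\mu), \dots, c_{t}(\mu)\bigr)$ is an order embedding of $\Lambda_{\lambda}$ into $\bigl(\mathbb{Z}_{\geq 0}^{t}, \leq\bigr)$.

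Next I would show that the image of this embedding is closed under coordinatewise minimum and maximum. Fixing $\mu, \nu \in \Lambda_{\lambda}$, I would establish that $\eta := \lambda - \sum_{i} \min\bigl(c_{i}(\mu), c_{i}(\nu)\bigr)\, \alpha_{i}$ lies in $\Lambda_{\lambda}$ by induction on $\delta(\mu, \nu) := \sum_{i} |c_{i}(\mu) - c_{i}(\nu)|$. When $\delta(\mu, \nu) = 0$ we have $\mu = \nu = \eta$, and when $\mu$ and $\nu$ are comparable, say $\mu \leq \nu$, we have $\eta = \mu$. Otherwise $\mu$ and $\nu$ are incomparable, and the crux is to exhibit a simple reflection $s_{k}$ with $c_{k}(\mu) > c_{k}(\nu)$ and $(\mu, \alpha_{k}^{\vee}) = -1$, or the same statement with $\mu$ and $\nu$ interchanged: then $s_{k}\mu = \mu + \alpha_{k} \in \Lambda_{\lambda}$ satisfies $c_{k}(s_{k}\mu) = c_{k}(\mu) - 1 \geq c_{k}(\nu)$, so the coordinatewise minimum of $s_{k}\mu$ and $\nu$ is again $\eta$ while $\delta(s_{k}\mu, \nu) = \delta(\mu, \nu) - 1$, and the inductive hypothesis applies. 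Closure under coordinatewise maximum follows symmetrically (run the same argument starting from the top element $w_{0}\lambda$, or transport everything along the order-reversing involution $w_{0}$). Granting closure, the order on $\Lambda_{\lambda}$ being coordinatewise makes $\eta$ the meet of $\mu$ and $\nu$ and $\lambda - \sum_{i}\max\bigl(c_{i}(\mu), c_{i}(\nu)\bigr)\alpha_{i}$ their join, so $\Lambda_{\lambda}$ is a sublattice of the distributive lattice $\mathbb{Z}_{\geq 0}^{t}$ and is therefore distributive.

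The step I expect to be the genuine obstacle is the production of the straightening reflection $s_{k}$: one must rule out that incomparable $\mu, \nu \in \Lambda_{\lambda}$ simultaneously have $(\mu, \alpha_{k}^{\vee}) \geq 0$ for every coordinate $k$ in which $\mu$ strictly exceeds $\nu$ and $(\nu, \alpha_{k}^{\vee}) \geq 0$ for every coordinate in which $\nu$ strictly exceeds $\mu$. I would attack this by expanding $(\mu - \nu, \mu - \nu) > 0$ and playing these sign constraints against the disjoint supports of the positive and negative parts of $\mu - \nu$ in the simple-root basis; alternatively, one may simply invoke Proctor's treatment of minuscule weight posets in \cite{Proctor}, where this lattice property is established directly.
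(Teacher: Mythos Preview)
The paper does not actually prove this proposition; it simply cites Proctor \cite{Proctor} (case-by-case) and Stembridge \cite{Stembridge} (uniform, via heaps of fully commutative elements). Your attempt at a direct argument via the coordinate embedding $\mu \mapsto (c_1(\mu),\ldots,c_t(\mu))$ is a genuinely different route, and your reductions up through the inductive scheme on $\delta(\mu,\nu)$ are correct. The difficulty is precisely where you locate it: the existence of a ``straightening reflection.''

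Your proposed attack on that step does not close the gap. Write $\mu-\nu = -p+q$ with $p=\sum_{i\in A} d_i\alpha_i$, $q=\sum_{j\in B} e_j\alpha_j$, $A\cap B=\varnothing$. Expanding $(\mu-\nu,p)$ two ways gives
\[
(\mu-\nu,p)=(q,p)-(p,p)<0,
\]
since $(q,p)\le 0$ (off-diagonal Cartan entries are nonpositive) and $p\neq 0$. Playing this against your assumed sign constraints \emph{does} yield a contradiction --- but only if you simultaneously assume the negation of the meet-step \emph{and} the negation of the join-step. Concretely, what $(\mu-\nu,p)<0$ forces is that for some $k\in A$ one has $(\mu-\nu,\alpha_k^{\vee})<0$, i.e.\ either $(\mu,\alpha_k^{\vee})=-1$ or $(\nu,\alpha_k^{\vee})=1$. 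The first is the meet-straightening you want; the second is a \emph{join}-straightening (replacing $\nu$ by $\nu-\alpha_k$ decreases $\delta$ but changes the coordinatewise minimum). So the norm argument proves only that at each stage one can straighten toward the meet \emph{or} toward the join, not that the meet side alone can always be advanced. Your induction, which treats meets and joins separately, therefore stalls; and running them simultaneously does not help, because a join-step alters the target meet. This is exactly the subtlety that makes the distributivity of $\Lambda_\lambda$ a theorem rather than a routine observation, and it is why both Proctor and Stembridge take a different path. Invoking \cite{Proctor} at this point, as you suggest for a fallback, is of course correct --- but then it \emph{is} the paper's proof, and the coordinate scaffolding becomes decorative.
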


\begin{proof}
See Proctor \cite{Proctor}, Propositions 3.2 and 4.1.  For a uniform proof, see Stembridge \cite{Stembridge}, Theorems 6.1 and 7.1.  
\end{proof}

\begin{df}
Let $V$ be a minuscule representation of $\mathfrak{g}$ with minuscule weight $\lambda$.  The restriction of the partial order on $\Lambda_{\lambda}$ to its join-irreducible elements is the \textit{minuscule poset} for $V$, which we denote by $P_{\lambda}$.  
\end{df}

\begin{rem}
If $L$ is any distributive lattice, and $P$ is its poset of join-irreducible elements, then $J(P) \cong L$ (cf. Stanley \cite{ec1}, Proposition 3.4.2).  Thus, if $V$ is a minuscule representation of $\mathfrak{g}$ with minuscule weight $\lambda$, and $P_{\lambda}$ is its minuscule poset, then $J(P_{\lambda})$ is a distributive lattice isomorphic to $\Lambda_{\lambda}$.  
\end{rem}

\begin{df}
Let $P$ be a poset.  Then $P$ is \textit{minuscule} if there exists a complex simple Lie algebra $\mathfrak{g}$ and a dominant weight $\lambda$ of $\mathfrak{g}$ for which the $\mathfrak{g}$-representation $V^{\lambda}$ is minuscule and $P \cong P_{\lambda}$.  
\end{df}

Suppose that $\lambda$ is a dominant weight of $\mathfrak{g}$ for which $V^{\lambda}$ is minuscule.  For all $\mu \in \Lambda_{\lambda}$, we denote by $\Lambda_{\lambda}^{\mu}$ the restriction of the partial order on $\Lambda_{\lambda}$ to the set $\lbrace \nu \in \Lambda_{\lambda} : \nu \leq \mu \rbrace$.  Since $\Lambda_{\lambda}$ is a distributive lattice, we obtain a family of distributive lattices indexed by $\Lambda_{\lambda}$.    

Concomitant with $\lbrace \Lambda_{\lambda}^{\mu} \rbrace_{\mu \in \Lambda_{\lambda}}$ is a family of labeled posets $\lbrace P_{\lambda, \mu} \rbrace_{\mu \in \Lambda_{\lambda}}$, which we refer to as \textit{heaps}, constructed so that the labeled linear extensions of $P_{\lambda, \mu}$ catalogue the maximal chains of $\Lambda_{\lambda}^{\mu}$.  Fixing $\mu$, we see that for all $\nu \in \Lambda_{\lambda}^{\mu}$, the maximal chains of $\Lambda_{\lambda}^{\nu}$ are precisely the saturated chains of $\Lambda_{\lambda}^{\mu}$ originating at $\lambda$ and terminating at $\nu$, so the correspondence between labeled linear extensions of $P_{\lambda, \nu}$ and maximal chains of $\Lambda_{\lambda}^{\nu}$ embeds $P_{\lambda, \nu}$ as an order ideal of $P_{\lambda, \mu}$.  In unison, these correspondences determine that the map $J(P_{\lambda, \mu}) \rightarrow \Lambda_{\lambda}^{\mu}$ by $P_{\lambda, \nu} \mapsto \nu$ into which they combine is an isomorphism.  

The \textit{raison d'\^etre} of our excursion into heaps is the realization of an explicit isomorphism $J(P_{\lambda}) \cong \Lambda_{\lambda}$.  We designate the heap $P_{\lambda, w_0 \lambda}$, which accomplishes this task, the \textit{minuscule heap} for $V^{\lambda}$.  Note that $P_{\lambda, w_0 \lambda}$ and $\Lambda_{\lambda}^{w_0 \lambda}$ coincide as posets with $P_{\lambda}$ and $\Lambda_\lambda$, respectively, so the map $J(P_{\lambda, w_0 \lambda}) \rightarrow \Lambda_{\lambda}^{w_0 \lambda}$ by $P_{\lambda, \mu} \mapsto \mu$ is indeed an isomorphism $J(P_{\lambda}) \xrightarrow{\sim} \Lambda_{\lambda}$.  

In Rush--Shi \cite{Rush}, we discussed heaps in the context of Bruhat posets, following Stembridge \cite{Stembridge}.  Here we wish to emphasize the representation-theoretic aspects of the story, so we hew more closely to the presentation of Stembridge \cite{Stembridge2}.  

\begin{prop}[Bourbaki \cite{Bourbaki}, Exercise VI.I.24] \label{bourbaki}   
Let $\mu \in \Lambda_{\lambda}$.  Then $(\mu, \alpha_i^{\vee}) \in \lbrace -1, 0, 1 \rbrace$ for all $1 \leq i \leq t$.  
\end{prop}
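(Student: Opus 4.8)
The plan is to extract the bound from two features of a minuscule representation $V = V^{\lambda}$: the transitivity of the Weyl group action on $\Lambda_{\lambda}$, and the fact that $W$ acts on $\mathfrak{h}_{\mathbb{R}}^{*}$ by isometries of $(\cdot,\cdot)$. These together force every weight of $V$ to have the same norm as $\lambda$, and the desired inequality $\lvert(\mu,\alpha_i^{\vee})\rvert \le 1$ will emerge upon comparing $(\mu,\mu)$ with $(\mu-\alpha_i,\mu-\alpha_i)$.

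Concretely, I would argue by contradiction. Suppose $\lvert(\mu,\alpha_i^{\vee})\rvert \ge 2$ for some $\mu \in \Lambda_{\lambda}$ and some $i$. Since $W$ preserves $\Lambda_{\lambda}$ and $(s_i\mu,\alpha_i^{\vee}) = -(\mu,\alpha_i^{\vee})$, I may replace $\mu$ by $s_i\mu$ if need be and assume $(\mu,\alpha_i^{\vee}) = k \ge 2$. Decomposing $V$ as a module over the $\mathfrak{sl}_2$-subalgebra attached to $\alpha_i$ shows that the $\alpha_i$-string of weights through $\mu$ is unbroken, of the form $\mu - p\alpha_i, \dots, \mu + q\alpha_i$ with $p, q \ge 0$ and $p - q = k$; in particular $p \ge k \ge 1$, so $\mu - \alpha_i$ is a weight of $V$, that is, $\mu - \alpha_i \in \Lambda_{\lambda}$.

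On the other hand, from $2(\mu,\alpha_i) = k(\alpha_i,\alpha_i)$ one gets
\[
(\mu - \alpha_i, \mu - \alpha_i) \;=\; (\mu,\mu) + (\alpha_i,\alpha_i)(1-k) \;<\; (\mu,\mu),
\]
since $k \ge 2$ and $(\alpha_i,\alpha_i) > 0$ by positive-definiteness of the form on $\mathfrak{h}_{\mathbb{R}}^{*}$. But transitivity gives $\mu = w\lambda$ for some $w \in W$, whence $(\mu,\mu) = (\lambda,\lambda)$; the same applies to every element of $\Lambda_{\lambda}$, and in particular $(\mu-\alpha_i,\mu-\alpha_i) = (\lambda,\lambda) = (\mu,\mu)$, contradicting the strict inequality. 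Hence $\lvert(\mu,\alpha_i^{\vee})\rvert \le 1$, which is the assertion because $(\mu,\alpha_i^{\vee})$ is an integer by Definition~\ref{weight}.

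The one ingredient that lies a hair beyond routine manipulation is the $\alpha_i$-string statement — that $(\mu,\alpha_i^{\vee}) \ge 1$ forces $\mu - \alpha_i$ to be a weight — so that is the step I would be most careful to cite precisely (it is the standard output of restricting $V$ to an $\mathfrak{sl}_2$, and is available in Kirillov's development); the remainder is immediate from the minuscule hypothesis and the $W$-invariance of the inner product. It is worth noting that this computation is essentially one half of the characterization of minuscule weights, which is why it figures as an exercise in Bourbaki.
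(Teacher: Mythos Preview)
Your argument is correct. The computation $(\mu-\alpha_i,\mu-\alpha_i) = (\mu,\mu) + (1-k)(\alpha_i,\alpha_i)$ is right, and the $\alpha_i$-string fact you invoke (that $(\mu,\alpha_i^{\vee}) \ge 1$ implies $\mu - \alpha_i$ is again a weight of $V$) is exactly the standard $\mathfrak{sl}_2$-restriction statement, available for instance as part of Theorem~8.15 in Kirillov \cite{Kirillov}. The contradiction with the constancy of $(\cdot,\cdot)$ on the $W$-orbit $\Lambda_{\lambda}$ then goes through as you wrote.

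As to comparison: the paper gives no proof of its own, simply citing Bourbaki \cite{Bourbaki}, Exercise VI.I.24. Your argument is essentially the intended solution to that exercise --- indeed, as you remark, this is one direction of the Bourbaki characterization of minuscule weights --- so there is no meaningful methodological difference to report. Your write-up supplies a self-contained justification where the paper defers to a reference.
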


\begin{prop} \label{cover}
Let $\mu \in \Lambda_{\lambda}$.  If $\mu \lessdot \mu - \alpha_i$ is a covering relation, then $s_i (\mu) = \mu - \alpha_i$.
\end{prop}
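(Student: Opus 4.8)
The plan is to reduce the claim to a one-line computation with the pairing against the coroot $\alpha_i^{\vee}$. Recall that the simple reflection acts by $s_i(\nu) = \nu - (\nu, \alpha_i^{\vee}) \alpha_i$ for every $\nu \in \mathfrak{h}_{\mathbb{R}}^*$. Since $\alpha_i \neq 0$, the asserted identity $s_i(\mu) = \mu - \alpha_i$ is therefore \emph{equivalent} to the single scalar equation $(\mu, \alpha_i^{\vee}) = 1$. So the entire content of the proposition is the evaluation of this integer.

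The first step is to extract from the hypothesis the only thing we shall need, namely that $\mu - \alpha_i$ is itself a weight of $V^{\lambda}$. Indeed, since $\mu \lessdot \mu - \alpha_i$ is a covering relation \emph{in $\Lambda_{\lambda}$}, both of its endpoints lie in $\Lambda_{\lambda}$; in particular $\mu - \alpha_i \in \Lambda_{\lambda}$. (Neither the fact that the relation is covering nor that $\mu - \alpha_i$ differs from $\mu$ plays any further role.)

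The second step is to invoke Proposition~\ref{bourbaki} twice. Applied to $\mu \in \Lambda_{\lambda}$, it gives $(\mu, \alpha_i^{\vee}) \in \lbrace -1, 0, 1 \rbrace$. Applied to $\mu - \alpha_i \in \Lambda_{\lambda}$, and using $(\alpha_i, \alpha_i^{\vee}) = 2$, it gives $(\mu - \alpha_i, \alpha_i^{\vee}) = (\mu, \alpha_i^{\vee}) - 2 \in \lbrace -1, 0, 1 \rbrace$, hence $(\mu, \alpha_i^{\vee}) \in \lbrace 1, 2, 3 \rbrace$. Intersecting the two constraints forces $(\mu, \alpha_i^{\vee}) = 1$, which is exactly what was wanted.

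I do not anticipate any real obstacle: the statement falls out immediately once Proposition~\ref{bourbaki} is available. The one point worth flagging is that applying the Bourbaki bound to $\mu$ alone does not suffice — it merely narrows $(\mu, \alpha_i^{\vee})$ to three possibilities — and that the value is pinned down only by applying the \emph{same} bound to the shifted weight $\mu - \alpha_i$. (An alternative route, should one prefer to avoid citing Proposition~\ref{bourbaki}, is to analyze the $\alpha_i$-root string through $\mu$: finite-dimensionality forces it to be an unbroken string of length at most two in a minuscule representation, and the presence of $\mu - \alpha_i$ in it then likewise yields $(\mu, \alpha_i^{\vee}) = 1$.)
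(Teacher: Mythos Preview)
Your argument is correct and is essentially the paper's own proof: the paper also observes that $(\mu,\alpha_i^{\vee}) - (\mu-\alpha_i,\alpha_i^{\vee}) = (\alpha_i,\alpha_i^{\vee}) = 2$ and then invokes Proposition~\ref{bourbaki} (applied to both $\mu$ and $\mu-\alpha_i$) to force $(\mu,\alpha_i^{\vee}) = 1$. The only difference is presentation---you spell out the two applications of Proposition~\ref{bourbaki} and intersect the resulting constraint sets, whereas the paper compresses this into a single sentence.
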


\begin{proof}
Suppose $\mu \lessdot \mu - \alpha_i$ is a covering relation.  Since $(\mu, \alpha_i^{\vee}) - (\mu - \alpha_i, \alpha_i^{\vee}) = (\alpha_i, \alpha_i^{\vee}) = 2$, we see, in view of Proposition~\ref{bourbaki}, that $(\mu, \alpha_i^{\vee}) = 1$.  Hence $s_i (\mu) = \mu - \alpha_i$, as desired.
\end{proof}

\begin{df} \label{lamminus}
Let $w \in W$.  Then $w$ is \textit{$\lambda$-minuscule} if there exists a reduced word $w = s_{i_{\ell}} s_{i_{\ell-1}} \cdots s_{i_1}$ such that \[\lambda \lessdot s_{i_1} \lambda \lessdot \cdots \lessdot (s_{i_{\ell}} s_{i_{\ell-1}} \cdots s_{i_1}) \lambda = w \lambda\] is a saturated chain in $\Lambda_{\lambda}$.  
\end{df}

\begin{prop}[Stembridge \cite{Stembridge2}, Proposition 2.1] \label{everyred}
Let $w \in W$.  If $w$ is $\lambda$-minuscule and $w = s_{i_{\ell}} s_{i_{\ell-1}} \cdots s_{i_1}$ is a reduced word, then \[\lambda \lessdot s_{i_1} \lambda \lessdot \cdots \lessdot (s_{i_{\ell}} s_{i_{\ell-1}} \cdots s_{i_1}) \lambda = w \lambda\] is a saturated chain in $\Lambda_{\lambda}$.  
\end{prop}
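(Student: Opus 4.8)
The plan is to reduce the entire statement to a one-line computation with the \emph{height} functional. Let $\mathrm{ht} \colon \mathfrak{h}_{\mathbb{R}}^* \to \mathbb{R}$ be the linear functional determined by $\mathrm{ht}(\alpha_i) = 1$ for $1 \leq i \leq t$; this is well defined by Proposition~\ref{rootbasis}, and it is strictly positive on every nonzero nonnegative integer combination of simple roots. Since a simple root cannot be written as a sum of two such combinations (compare heights), no weight lies strictly between $\mu$ and $\mu - \alpha_i$ in the root order; hence, for every $i$ and every $\mu$ with $\mu, \mu - \alpha_i \in \Lambda_{\lambda}$, the relation $\mu \lessdot \mu - \alpha_i$ is a covering relation in $\Lambda_{\lambda}$. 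This observation, together with the reflection formula $s_i(\mu) = \mu - (\mu, \alpha_i^{\vee})\alpha_i$ and Proposition~\ref{bourbaki}, is all the input I would need.

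The first step is to extract a numerical invariant from the hypothesis. By Definition~\ref{lamminus}, fix a reduced word $w = s_{k_{\ell}} s_{k_{\ell-1}} \cdots s_{k_1}$ (necessarily of length $\ell = \ell(w)$) for which $\lambda \lessdot s_{k_1}\lambda \lessdot \cdots \lessdot w\lambda$ is a saturated chain in $\Lambda_{\lambda}$. Set $\nu_0 := \lambda$ and $\nu_m := s_{k_m}\nu_{m-1}$ for $1 \leq m \leq \ell$. Since $\nu_{m-1} \lessdot \nu_m$, the difference $\nu_{m-1} - \nu_m = (\nu_{m-1}, \alpha_{k_m}^{\vee})\alpha_{k_m}$ is a nonzero nonnegative integer combination of simple roots, so $(\nu_{m-1}, \alpha_{k_m}^{\vee}) > 0$, whence $(\nu_{m-1}, \alpha_{k_m}^{\vee}) = 1$ by Proposition~\ref{bourbaki}. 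Telescoping $\lambda - w\lambda = \sum_{m=1}^{\ell} (\nu_{m-1}, \alpha_{k_m}^{\vee})\alpha_{k_m}$ and applying $\mathrm{ht}$ gives $\mathrm{ht}(\lambda - w\lambda) = \ell = \ell(w)$.

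The second step repeats the telescoping for the \emph{given} reduced word $w = s_{i_{\ell}} s_{i_{\ell-1}} \cdots s_{i_1}$ (of the same length $\ell$). Put $\mu_0 := \lambda$ and $\mu_m := s_{i_m}\mu_{m-1}$; each $\mu_m$ lies in $\Lambda_{\lambda}$ because $W$ preserves $\Lambda_{\lambda}$, and $\mu_{\ell} = w\lambda$. Exactly as before,
\[
\sum_{m=1}^{\ell} (\mu_{m-1}, \alpha_{i_m}^{\vee}) \;=\; \mathrm{ht}(\lambda - w\lambda) \;=\; \ell .
\]
By Proposition~\ref{bourbaki} every summand lies in $\{-1, 0, 1\}$, and there are $\ell$ of them, so a sum equal to $\ell$ forces each summand to be $1$. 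Hence $(\mu_{m-1}, \alpha_{i_m}^{\vee}) = 1$, so $\mu_m = \mu_{m-1} - \alpha_{i_m}$, and by the observation in the first paragraph $\mu_{m-1} \lessdot \mu_m$ is a covering relation for every $m$. This is exactly the assertion that $\lambda \lessdot s_{i_1}\lambda \lessdot \cdots \lessdot (s_{i_{\ell}} \cdots s_{i_1})\lambda = w\lambda$ is a saturated chain in $\Lambda_{\lambda}$.

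I do not anticipate a genuine obstacle. The single idea that makes the argument run is pairing the telescoped difference $\lambda - w\lambda$ against the height functional, which converts the combinatorics of a chain of coverings into one integer identity to which Proposition~\ref{bourbaki} applies verbatim; everything else is bookkeeping. The only point demanding a moment's care is the claim that a pair of weights of $V^{\lambda}$ differing by a simple root is automatically a covering pair in $\Lambda_{\lambda}$, but that is immediate from the indecomposability of simple roots and is in any case implicit in the setup around Proposition~\ref{cover}.
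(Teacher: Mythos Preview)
Your argument is correct. The telescoping identity $\mathrm{ht}(\lambda - w\lambda) = \sum_{m=1}^{\ell}(\mu_{m-1},\alpha_{i_m}^{\vee})$ holds for \emph{any} word in simple reflections computing $w$, and once the value $\ell$ has been pinned down using the witnessing reduced word from Definition~\ref{lamminus}, Proposition~\ref{bourbaki} forces every summand for the given reduced word to equal $1$. The one subtlety you flagged --- that two weights of $V^{\lambda}$ differing by a simple root form a covering pair in $\Lambda_{\lambda}$ --- is indeed immediate from the definition of the root order as a transitive closure of simple-root steps, since covering relations survive restriction to a subset.

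By way of comparison, the paper itself supplies no argument here: the proposition is quoted from Stembridge \cite{Stembridge2}, Proposition~2.1, and left as a black box. Your proof is therefore not a variant of the paper's but a genuine replacement, and a pleasantly economical one: it stays entirely within the weight-lattice picture already set up in Section~2, uses only Proposition~\ref{bourbaki} and the reflection formula, and avoids any appeal to the Coxeter-theoretic machinery (exchange condition, full commutativity) that underlies Stembridge's treatment. What Stembridge's approach buys in return is a broader structural statement --- his Proposition~2.1 simultaneously yields Proposition~\ref{fullcomm} --- whereas your height argument is tailored to this single conclusion. For the purposes of this paper, where Proposition~\ref{everyred} is only ever invoked in its stated form, your route is self-contained and arguably preferable.
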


\begin{prop} \label{everysat}
Let $\mu \in \Lambda_{\lambda}$.  Then there exists a unique $\lambda$-minuscule element $w \in W$ such that $w \lambda = \mu$.  Furthermore, if \[\lambda \lessdot \lambda - \alpha_{i_1} \lessdot \cdots \lessdot \lambda - \alpha_{i_1} - \alpha_{i_2} - \cdots - \alpha_{i_{\ell}} = \mu\] is a saturated chain in $\Lambda_{\lambda}$, then $s_{i_{\ell}} s_{i_{\ell-1}} \cdots s_{i_1}$ is a reduced word for $w$.  
\end{prop}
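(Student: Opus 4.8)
The plan is to derive the Proposition from two facts about the weight poset $\Lambda_{\lambda}$.  The first is that every covering relation in $\Lambda_{\lambda}$ has the form $\nu \lessdot \nu - \alpha_j$ for some simple root $\alpha_j$; granting this, the quantity $d(\mu) := \mathrm{ht}(\lambda - \mu)$ --- the sum of the coordinates of $\lambda - \mu$ with respect to $\Pi$, a nonnegative integer because $\lambda$ is the highest weight --- increases by exactly one along each cover, so $\Lambda_{\lambda}$ is graded with rank function $d$, and every saturated chain from $\lambda$ to $\mu$ has length $d(\mu)$.  The second is the identity $d(\mu) = \min \lbrace \ell(w) : w \in W,\ w \lambda = \mu \rbrace$.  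With these in hand the Proposition is immediate: a saturated chain from $\lambda$ to $\mu$ produces, via Proposition~\ref{cover}, a word of length $d(\mu)$ for some $w$ with $w\lambda = \mu$; the identity forces this word to be reduced (this is the ``furthermore'' clause), and the chain itself certifies that $w$ is $\lambda$-minuscule, while the identity also pins down any $\lambda$-minuscule element as the minimal-length representative of its coset.

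To prove the first fact, suppose $\nu \lessdot \mu$ in $\Lambda_{\lambda}$ and write $\nu - \mu = \sum_i c_i \alpha_i$ with the $c_i$ nonnegative integers, not all zero.  Positive-definiteness of $(\cdot, \cdot)$ gives $0 < (\nu - \mu, \nu - \mu) = \sum_i c_i (\alpha_i, \nu - \mu)$, so there is an index $j$ with $c_j \geq 1$ and $(\alpha_j^{\vee}, \nu) > (\alpha_j^{\vee}, \mu)$.  By Proposition~\ref{bourbaki} both inner products belong to $\lbrace -1, 0, 1 \rbrace$, so either $(\nu, \alpha_j^{\vee}) = 1$ or else $(\nu, \alpha_j^{\vee}) = 0$ and $(\mu, \alpha_j^{\vee}) = -1$.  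In the first case $\nu - \alpha_j = s_j(\nu) \in W\lambda = \Lambda_{\lambda}$ and $\nu \leq \nu - \alpha_j \leq \mu$, where the second inequality uses $c_j \geq 1$; in the second case $\mu + \alpha_j = s_j(\mu) \in \Lambda_{\lambda}$ and $\nu \leq \mu + \alpha_j \leq \mu$.  In either case the covering hypothesis forces the interpolated weight to equal $\mu$ or $\nu$, and hence $\nu - \mu = \alpha_j$.  (This is also recorded in Proctor~\cite{Proctor} and Stembridge~\cite{Stembridge}.)

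For the identity, ``$\leq$'' follows by taking an arbitrary $w$ with $w\lambda = \mu$, a reduced word $w = s_{j_{\ell}} \cdots s_{j_1}$ with $\ell = \ell(w)$, and the partial weights $\mu_k := s_{j_k} \cdots s_{j_1} \lambda \in \Lambda_{\lambda}$: from $\mu_{k-1} - \mu_k = (\mu_{k-1}, \alpha_{j_k}^{\vee}) \alpha_{j_k}$ with $(\mu_{k-1}, \alpha_{j_k}^{\vee}) \in \lbrace -1, 0, 1 \rbrace$ (Proposition~\ref{bourbaki}), telescoping shows $\lambda - \mu$ is a combination of simple roots whose coordinate sum is $\sum_k (\mu_{k-1}, \alpha_{j_k}^{\vee}) \leq \ell$, whence $d(\mu) \leq \ell(w)$.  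And ``$\geq$'' follows from a saturated chain $\lambda = \nu_0 \lessdot \nu_1 \lessdot \cdots \lessdot \nu_m = \mu$: by the first fact each step is $\nu_k = \nu_{k-1} - \alpha_{i_k}$ and $m = d(\mu)$, Proposition~\ref{cover} gives $\nu_k = s_{i_k}(\nu_{k-1})$, and so $w := s_{i_m} \cdots s_{i_1}$ satisfies $w\lambda = \mu$ with $\ell(w) \leq m = d(\mu)$.

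Putting the two inequalities together, the element $w$ built from any saturated chain has $\ell(w) = m = d(\mu)$, so the word $s_{i_m} \cdots s_{i_1}$ is reduced and, the chain being saturated, $w$ is $\lambda$-minuscule --- this gives existence and the ``furthermore'' clause simultaneously.  For uniqueness, any $\lambda$-minuscule $v$ with $v\lambda = \mu$ carries a reduced word realizing a saturated chain of length $\ell(v)$ from $\lambda$ to $\mu$, so $\ell(v) = d(\mu) = \min \lbrace \ell(u) : u\lambda = \mu \rbrace$; since $\lbrace u \in W : u\lambda = \mu \rbrace$ is a left coset of $\mathrm{Stab}_W(\lambda)$, which for dominant $\lambda$ is the standard parabolic subgroup generated by $\lbrace s_i : (\lambda, \alpha_i^{\vee}) = 0 \rbrace$, and a coset of a standard parabolic subgroup has a unique element of minimal length, $v$ must coincide with that element.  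I expect the first fact --- ruling out covers by a non-simple positive root, which requires the short case analysis above --- together with the partial-weight bookkeeping in the ``$\leq$'' half of the identity, to be where the care is concentrated; the uniqueness step relies only on the standard theory of minimal-length coset representatives.
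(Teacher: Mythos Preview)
Your argument is correct and substantially more complete than the paper's own proof, which merely observes (via Proposition~\ref{cover}) that a saturated chain produces some $w$ with $w\lambda = \mu$ and then defers the reducedness and uniqueness claims wholesale to Proctor~\cite{Proctor} and Stembridge~\cite{Stembridge}.  You supply the missing content directly: the classification of covers in $\Lambda_\lambda$ (your ``first fact''), the resulting graded structure via the height function $d$, the identity $d(\mu) = \min\{\ell(w) : w\lambda = \mu\}$, and the appeal to the unique minimal-length representative in a left coset of the standard parabolic $\mathrm{Stab}_W(\lambda)$.  This is the standard route, and in aggregate is almost certainly what the cited references contain; what your write-up buys over the paper's version is self-containment, at the cost of importing two classical facts the paper does not state (that the stabilizer of a dominant weight is a standard parabolic, and that such cosets have unique minimal-length representatives).

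One minor expository point: when you write that existence and the ``furthermore'' clause are obtained simultaneously, the latter strictly requires uniqueness to identify \emph{which} $w$ the given chain yields a reduced word for.  Since you establish uniqueness immediately afterward, the logic closes---but it would read more cleanly to postpone the ``furthermore'' conclusion until after the uniqueness paragraph.
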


\begin{proof}
Let \[\lambda \lessdot \lambda - \alpha_{i_1} \lessdot \cdots \lessdot \lambda - \alpha_{i_1} - \alpha_{i_2} - \cdots - \alpha_{i_{\ell}} = \mu\] be a saturated chain in $\Lambda_{\lambda}$, and take $w := s_{i_{\ell}} s_{i_{\ell-1}} \cdots s_{i_1}$.  From Proposition~\ref{cover}, it follows that $w \lambda = \mu$.  The proof that $w$ is the unique $\lambda$-minuscule element such that $w \lambda = \mu$, and that all saturated chains originating at $\lambda$ and terminating at $\mu$ correspond to reduced words for $w$ is technical and not sufficiently pertinent to our purposes here to merit its inclusion.  However, we note that it emerges without altogether much effort from Proposition 4.1 in Proctor \cite{Proctor} and Theorems 6.1 and 7.1 of Stembridge \cite{Stembridge}.  
\end{proof}

\begin{df} \label{heap}
Let $w \in W$ be $\lambda$-minuscule, and let $w = s_{i_{\ell}} s_{i_{\ell-1}} \cdots s_{i_1}$ be a reduced word.  The \textit{heap} $P_{\lambda, (i_1, i_2, \ldots, i_{\ell})}$ associated to $s_{i_{\ell}} s_{i_{\ell -1}} \cdots s_{i_1}$ is the labeled set $\lbrace 1, 2, \ldots, \ell \rbrace$, where $i_j$ is the \textit{label} of the element $j$ for all $1 \leq j \leq \ell$, equipped with the partial order arising as the transitive closure of the relations $j < j'$ for all $1 \leq j < j' \leq \ell$ for which $s_{i_j}$ and $s_{i_{j'}}$ do not commute.  
\end{df}

\begin{prop} \label{linext}
Let $w \in W$ be $\lambda$-minuscule, and let $w = s_{i_{\ell}} s_{i_{\ell-1}} \cdots s_{i_1}$ be a reduced word.  Let $\mathcal{L}\left(P_{\lambda, (i_1, i_2, \ldots, i_{\ell})}\right) := \lbrace \mathcal{A} : \mathcal{A}_1 \lessdot \mathcal{A}_2 \lessdot \cdots \lessdot \mathcal{A}_{\ell} \rbrace$ be the set of linear extensions of the heap $P_{\lambda, (i_1, i_2, \ldots, i_{\ell})}$.  For all $\mathcal{A} \in \mathcal{L}$, let $s(A) := s_{i_{\mathcal{A}_{\ell}}} s_{i_{\mathcal{A}_{\ell-1}}} \cdots s_{i_{\mathcal{A}_1}}$.  Then $\lbrace s(A) \rbrace_{\mathcal{A} \in \mathcal{L}\left(P_{\lambda, (i_1, i_2, \ldots, i_{\ell})}\right)}$ is the set of reduced words for $w$ in $W$.  
\end{prop}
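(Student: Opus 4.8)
The plan is to prove the two inclusions separately. For $\subseteq$, fix a linear extension $\mathcal{A}$; I want $s(\mathcal{A})$ to be a reduced word for $w$. Recall that any two linear extensions of a finite poset are joined by a sequence of moves, each transposing a pair of elements consecutive in the linear order and incomparable in the poset. So it suffices to check that in passing from the identity extension $(1, 2, \ldots, \ell)$ to $\mathcal{A}$ one move at a time, each move exchanges two adjacent letters $i_j, i_{j'}$ of the word with $s_{i_j} s_{i_{j'}} = s_{i_{j'}} s_{i_j}$: were $s_{i_j}$ and $s_{i_{j'}}$ non-commuting, the heap elements $j$ and $j'$ would be comparable in $P_{\lambda, (i_1, \ldots, i_{\ell})}$ by Definition~\ref{heap}, contradicting that the transposed elements are incomparable. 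A commutation of two adjacent letters alters neither the underlying group element nor the length $\ell$; since $\ell = \ell(w)$, every word so obtained, in particular $s(\mathcal{A})$, is a reduced word for $w$.

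For $\supseteq$, let $u$ be an arbitrary reduced word for $w$. By the word property for Coxeter groups (Matsumoto--Tits), $u$ is obtained from $(i_1, \ldots, i_{\ell})$ through a sequence of braid moves and commutation moves. The heart of the matter is that no braid move is needed --- that is, that $w$ is \emph{fully commutative}. Granting this, $u$ is joined to $(i_1, \ldots, i_{\ell})$ by commutation moves alone, and running the observation of the previous paragraph in reverse, each such move transposes two heap elements that are consecutive in the current linear order and incomparable: two consecutive comparable elements of a linear extension form a covering pair, and every covering relation in $P_{\lambda, (i_1, \ldots, i_{\ell})}$ is one of the defining relations of Definition~\ref{heap}, hence carries non-commuting labels. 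Thus the sequence of words traversed is a walk through the linear extensions of $P_{\lambda, (i_1, \ldots, i_{\ell})}$ starting at the identity extension, and if $\mathcal{A}$ is the extension at which it ends, then $u$ is the word read off from $\mathcal{A}$, i.e.\ $u = s(\mathcal{A})$.

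It remains to prove that $w$ is fully commutative. If it were not, some reduced word for $w$ would contain a consecutive factor $s_a s_b s_a \cdots$ whose length equals the order $m$ of $s_a s_b$, with $m \geq 3$. By Proposition~\ref{everyred}, feeding this word to $\lambda$ one letter at a time produces a saturated chain in $\Lambda_{\lambda}$, and Proposition~\ref{cover} forces $(\mu, \alpha_k^{\vee}) = 1$ at every step $\mu \lessdot \mu - \alpha_k$ along it. Tracking the two inner products $(\,\cdot\,, \alpha_a^{\vee})$ and $(\,\cdot\,, \alpha_b^{\vee})$ across the factor, while invoking Proposition~\ref{bourbaki} to keep their values in $\{-1, 0, 1\}$, one finds after three or four steps that the Cartan integers $(\alpha_a, \alpha_b^{\vee})$ and $(\alpha_b, \alpha_a^{\vee})$ would be forced outside the range admissible in any rank-two root system, a contradiction. (Equivalently, this is the known fact that $\lambda$-minuscule elements are fully commutative, which may be cited from Stembridge~\cite{Stembridge2}.)

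I expect this last computation to be the one genuinely substantive point; the translation between commutation classes of reduced words and linear extensions of the heap, though it needs a little care on account of repeated labels, is routine heap-theoretic bookkeeping.
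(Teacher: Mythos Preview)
Your argument is correct. The paper's own proof is a one-line citation: it observes that after translating between Stembridge's two settings, the statement is Proposition~1.2 of \cite{Stembridge}. You instead unpack that result directly, supplying the two inclusions via the standard correspondence between commutation classes of reduced words and linear extensions of the heap, and isolating full commutativity of $\lambda$-minuscule elements as the one nontrivial input. That input is precisely what the paper records separately as Proposition~\ref{fullcomm} (citing \cite{Stembridge2}), so your parenthetical citation is apt and your sketch of the inner-product bookkeeping, while terse, points at the right mechanism. The net effect is that you give a self-contained argument where the paper defers to the literature; mathematically the content is the same, but your version makes clear exactly where the minuscule hypothesis enters, which is pedagogically useful.
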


\begin{proof}
After translating from the setting of Stembridge in \cite{Stembridge2} to that in \cite{Stembridge}, we see that this follows from Proposition 1.2 of Stembridge \cite{Stembridge}.  
\end{proof}

Proposition~\ref{linext} tells us that the set of linear extensions of the heap associated to a reduced word for $w$ is independent of the choice of reduced word.  This suggests that the heaps associated to reduced words for $w$ are all pairwise isomorphic.  

\begin{prop}[Stembridge \cite{Stembridge2}, Proposition 2.1] \label{fullcomm}
Let $w \in W$ be $\lambda$-minuscule, and let $s_{i_{\ell}} s_{i_{\ell-1}} \cdots s_{i_1}$ and $s_{i'_{\ell}} s_{i'_{\ell-1}} \cdots s_{i'_1}$ be two reduced words for $w$.  Then there exists a sequence of commuting braid relations (viz., relations of the form $s_p s_q = s_q s_p$ for commuting simple reflections $s_p, s_q$) exchanging $s_{i_{\ell}} s_{i_{\ell-1}} \cdots s_{i_1}$ and $s_{i'_{\ell}} s_{i'_{\ell-1}} \cdots s_{i'_1}$.  
\end{prop}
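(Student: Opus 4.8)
The plan is to obtain Proposition~\ref{fullcomm} almost formally from Proposition~\ref{linext}, using the classical fact that the linear extensions of a finite poset are connected under transpositions of adjacent incomparable elements, together with the observation that, by the very definition of the heap, such a transposition of labels is a commuting braid relation. Concretely: fix a reduced word $(i_1, i_2, \ldots, i_\ell)$ for $w$ and let $P := P_{\lambda, (i_1, i_2, \ldots, i_\ell)}$ be its heap. By Proposition~\ref{linext}, every reduced word for $w$ has the form $s(\mathcal{A}) = s_{i_{\mathcal{A}_\ell}} s_{i_{\mathcal{A}_{\ell-1}}} \cdots s_{i_{\mathcal{A}_1}}$ for some linear extension $\mathcal{A} \in \mathcal{L}(P)$. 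So I would begin by choosing linear extensions $\mathcal{A}$ and $\mathcal{A}'$ of $P$ with $s(\mathcal{A}) = s_{i_\ell} s_{i_{\ell-1}} \cdots s_{i_1}$ and $s(\mathcal{A}') = s_{i'_\ell} s_{i'_{\ell-1}} \cdots s_{i'_1}$.

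Next I would invoke the standard fact that any two linear extensions of a finite poset $Q$ are joined by a chain of linear extensions in which consecutive terms differ by interchanging two entries that are adjacent in the extension and incomparable in $Q$. (One proof: it suffices to transform an arbitrary linear extension into one with a prescribed minimal element of $Q$ in the first slot; since that element is minimal, every entry standing to its left is incomparable to it and it may be bubbled to the front by legal transpositions, after which one recurses on $Q$ with that element deleted.) Applying this to $Q = P$ produces a chain $\mathcal{A} = \mathcal{B}^{(0)}, \mathcal{B}^{(1)}, \ldots, \mathcal{B}^{(r)} = \mathcal{A}'$ of linear extensions of $P$. For each step $\mathcal{B}^{(k)} \to \mathcal{B}^{(k+1)}$, the words $s(\mathcal{B}^{(k)})$ and $s(\mathcal{B}^{(k+1)})$ agree letter for letter except that an adjacent pair $s_a s_b$ in one appears as $s_b s_a$ in the other, where $a$ and $b$ are the labels of the two interchanged heap elements; and since those heap elements are incomparable in $P$, Definition~\ref{heap} forces $s_a$ and $s_b$ to commute. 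Hence $s(\mathcal{B}^{(k)})$ and $s(\mathcal{B}^{(k+1)})$ differ by a single commuting braid relation, and concatenating these steps connects $s_{i_\ell} s_{i_{\ell-1}} \cdots s_{i_1}$ to $s_{i'_\ell} s_{i'_{\ell-1}} \cdots s_{i'_1}$ by commuting braid relations, as required.

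I do not expect a genuine obstacle along this route; the only point demanding care is the verification in the last step that a transposition of incomparable heap elements corresponds to a commuting braid relation, which is immediate once one unwinds Definition~\ref{heap} (two heap elements can be incomparable only when no chain of non-commuting adjacencies links them, so in particular their labels commute). For a proof bypassing Proposition~\ref{linext}, one could instead combine the Matsumoto--Tits theorem with the claim that no reduced word for a $\lambda$-minuscule $w$ contains a full braid word $s_p s_q s_p \cdots$ of length $m_{pq} \geq 3$ as a consecutive subword: by Proposition~\ref{cover} such a subword would read off a saturated chain $\mu \lessdot \mu - \alpha_p \lessdot \mu - \alpha_p - \alpha_q \lessdot \mu - 2\alpha_p - \alpha_q \lessdot \cdots$ in $\Lambda_\lambda$, and comparing the covering conditions with Proposition~\ref{bourbaki} forces $(\alpha_q, \alpha_p^\vee) = -2$, hence $m_{pq} = 4$, whereupon the fourth letter of the braid word yields the contradiction $(\mu, \alpha_q^\vee) = 1$ against $(\mu, \alpha_q^\vee) = 0$. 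That rank-two bookkeeping is the only fiddly point of the alternative argument, and I would present the first proof.
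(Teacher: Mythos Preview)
The paper does not supply its own proof of Proposition~\ref{fullcomm}; the result is quoted from Stembridge~\cite{Stembridge2}, Proposition~2.1, without argument.

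Your primary route is circular. You invoke Proposition~\ref{linext} to place \emph{every} reduced word for $w$ among the $s(\mathcal{A})$ for linear extensions $\mathcal{A}$ of a single heap $P$. But that assertion --- that all reduced words of $w$ lie in one commutativity class encoded by one heap --- is exactly full commutativity, i.e., Proposition~\ref{fullcomm}. The easy half of Proposition~\ref{linext} (every linear extension yields a reduced word) is harmless; you need the converse, and the converse is equivalent to what you are trying to prove. Concretely, Proposition~1.2 of~\cite{Stembridge}, to which the paper appeals for Proposition~\ref{linext}, identifies the linear extensions of a heap with the reduced words \emph{in the commutativity class of the chosen word}; to conclude that these exhaust \emph{all} reduced words one must already know that $\lambda$-minuscule elements are fully commutative, and that is precisely~\cite{Stembridge2}, Proposition~2.1. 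So the first proof reduces Proposition~\ref{fullcomm} to itself. (The paper's ordering of Propositions~\ref{linext} and~\ref{fullcomm} is expository, not logical.)

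Your alternative, by contrast, is correct and self-contained, and it is essentially how Stembridge argues in~\cite{Stembridge2}. By Matsumoto--Tits it suffices to show that no reduced word for $w$ contains a consecutive factor $s_p s_q s_p \cdots$ of length $m_{pq} \geq 3$; via Proposition~\ref{everyred} such a factor reads off a saturated chain in $\Lambda_\lambda$, and Propositions~\ref{cover} and~\ref{bourbaki} force $(\alpha_q, \alpha_p^{\vee}) = -2$ from the first three steps (ruling out $m_{pq} \in \{3,6\}$ in finite type), whereupon the fourth step for $m_{pq} = 4$ yields the contradiction $(\mu,\alpha_q^{\vee}) = 1$ versus $(\mu,\alpha_q^{\vee}) = 0$ that you sketch. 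Present this argument rather than the first.
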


\begin{rem}
Proposition~\ref{fullcomm} amounts to saying that $\lambda$-minuscule elements of $W$ are \textit{fully commutative} in the terminology of Stembridge \cite{Stembridge}.  
\end{rem}

\begin{prop} \label{heapiso}
Let $w \in W$ be $\lambda$-minuscule, and let $s_{i_{\ell}} s_{i_{\ell-1}} \cdots s_{i_1}$ and $s_{i'_{\ell}} s_{i'_{\ell-1}} \cdots s_{i'_1}$ be two reduced words for $w$.  Then there exists a permutation $\sigma \in \mathfrak{S}_{\ell}$ such that $\sigma \colon \lbrace 1, 2, \ldots, \ell \rbrace \rightarrow \lbrace 1, 2, \ldots, \ell \rbrace$ defines an isomorphism of heaps $P_{\lambda, (i_1, i_2, \ldots, i_{\ell})} \xrightarrow{\sim} P_{\lambda, (i'_1, i'_2, \ldots, i'_{\ell})}$.  
\end{prop}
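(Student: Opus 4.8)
The plan is to bootstrap from Proposition~\ref{fullcomm}, which reduces the problem to the case of a single commuting braid move. By that proposition there is a chain of reduced words
\[(i_1, i_2, \ldots, i_\ell) = \mathbf{i}^{(0)},\ \mathbf{i}^{(1)},\ \ldots,\ \mathbf{i}^{(r)} = (i'_1, i'_2, \ldots, i'_\ell)\]
for $w$ in which each $\mathbf{i}^{(m+1)}$ is obtained from $\mathbf{i}^{(m)}$ by a single relation $s_p s_q = s_q s_p$, i.e.\ by interchanging two consecutive entries, say in positions $k$ and $k+1$, whose indexed simple reflections commute. Since a composite of heap isomorphisms is again a heap isomorphism, it suffices to produce, for each $m$, a permutation in $\mathfrak{S}_\ell$ effecting an isomorphism $P_{\lambda, \mathbf{i}^{(m)}} \xrightarrow{\sim} P_{\lambda, \mathbf{i}^{(m+1)}}$; the desired $\sigma$ is then their composite.

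So I would reduce to the following situation: $P := P_{\lambda, (a_1, \ldots, a_\ell)}$ and $P' := P_{\lambda, (b_1, \ldots, b_\ell)}$, where $b_k = a_{k+1}$, $b_{k+1} = a_k$, and $b_j = a_j$ for $j \notin \{k, k+1\}$, with $s_{a_k}$ and $s_{a_{k+1}}$ commuting, and I would take $\sigma := (k\ \ k{+}1) \in \mathfrak{S}_\ell$. Label-preservation is immediate from the definition of $(b_j)$. For the order, recall from Definition~\ref{heap} that each heap order is the transitive closure of its generating relations $\{(j, j') : j < j' \text{ as integers, and } s_{(\cdot)_j},\ s_{(\cdot)_{j'}} \text{ do not commute}\}$; since the transitive closure of a relation is transported correctly by any map that sends generating relations into the target order, it is enough to check that $\sigma$ carries every generating relation of $P$ to a generating relation of $P'$, and — by the evident symmetry between $P$ and $P'$ — that $\sigma = \sigma^{-1}$ does the same from $P'$ to $P$.

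This last verification is a short case analysis on the position of a pair $(j, j')$ relative to $\{k, k+1\}$ (either the pair avoids $\{k, k+1\}$, or exactly one endpoint lies in it, or the pair is $(k, k+1)$ itself). The only generating relation that $\sigma$ might conceivably destroy is one between $k$ and $k+1$, but no such relation exists in either heap precisely because $s_{a_k}$ and $s_{a_{k+1}}$ commute. For any other pair, the ``non-commuting'' predicate depends only on the two labels involved, which $\sigma$ permutes compatibly, so the predicate survives; and because $k$ and $k+1$ are \emph{adjacent} integers, $\sigma$ never reverses the integer order of a pair that it moves, so the image is again a bona fide generating relation. I expect this point — that adjacency of the interchanged positions is exactly what prevents $\sigma$ from flipping a relation over an intervening element — to be the only place where any care is needed; everything else in the argument is formal.
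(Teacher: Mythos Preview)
Your proposal is correct and follows essentially the same route as the paper: reduce via Proposition~\ref{fullcomm} to a single commuting braid move, take $\sigma$ to be the adjacent transposition $(k\ k{+}1)$, and observe that this is a label-preserving bijection which respects the heap order because $k$ and $k{+}1$ are incomparable in both heaps. The paper states that last point in a single clause, whereas you spell out the case analysis on generating relations; your more explicit treatment of why adjacency prevents order reversal is a welcome elaboration but not a different idea.
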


\begin{proof}
It suffices to show the result under the assumption that there exists a commuting braid relation transforming $s_{i_{\ell}} s_{i_{\ell-1}} \cdots s_{i_1}$ into $s_{i'_{\ell}} s_{i'_{\ell-1}} \cdots s_{i'_1}$.  In this case, there exists $1 \leq k < \ell$ such that $i'_j$ agrees with $i_j$ for all $j \in \lbrace 1, 2, \ldots, \ell \rbrace \setminus \lbrace k, k+1 \rbrace$, and $i'_k = i_{k+1}$ and $i'_{k+1} = i_k$.  Let $\sigma \in \mathfrak{S}_{\ell}$ be the transposition exchanging $k$ and $k+1$.  Then $\sigma \colon P_{\lambda, (i_1, i_2, \ldots, i_{\ell})} \rightarrow P_{\lambda, (i'_1, i'_2, \ldots, i'_{\ell})}$ is a bijection of labeled sets, and it is an isomorphism of heaps because in both heaps the elements $k$ and $k+1$ are incomparable.  
\end{proof}

Since there is only one isomorphism class of heaps associated to reduced words for $w$, we refer to any heap associated to a reduced word for $w$ as the heap associated to $w$, and we denote it by $P_{\lambda, w}$.  For all $\mu \in \Lambda_{\lambda}$, we write $P_{\lambda, \mu}$ for the heap associated to the unique $\lambda$-minuscule element $w$ for which $w \lambda = \mu$.  It follows from Proposition~\ref{linext} (reinterpreted via Propositions~\ref{everyred} and \ref{everysat}) that the linear extensions of $P_{\lambda, \mu}$ catalogue the saturated chains originating at $\lambda$ and terminating at $\mu$.  To key to the description of the structure of $P_{\lambda, \mu}$ is the observation that the same relationship holds between the linear extensions of $P_{\lambda, \nu}$ and the saturated chains originating at $\lambda$ and terminating at $\nu$ for all $\nu \leq \mu$.

\begin{thm} \label{structure}
Let $\mu \in \Lambda_{\lambda}$.  Given an order ideal $I \in J(P_{\lambda, \mu})$, let $\mathcal{A}_I$ be a linear extension of $I$, and set $\phi(I) := s(\mathcal{A}_I) \lambda$.  Then $\phi$ defines an isomorphism of posets $J(P_{\lambda, \mu}) \xrightarrow{\sim} \Lambda_{\lambda}^{\mu}$.  
\end{thm}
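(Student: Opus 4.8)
The plan is to show that $\phi$ is a well-defined bijection that carries covering relations to covering relations and also reflects them; since $J(P_{\lambda,\mu})$ and $\Lambda_{\lambda}^{\mu}$ are finite, this is enough to conclude that $\phi$ is an isomorphism of posets. To see that $\phi$ is well defined, fix an order ideal $I$ of $P_{\lambda,\mu}$ with $|I| = k$, and let $\mathcal{A}$ and $\mathcal{A}'$ be two linear extensions of $I$. Because $I$ is a down-set, its complement $P_{\lambda,\mu} \setminus I$ is a filter; fixing a linear extension $\mathcal{B}$ of the latter, both concatenations $\mathcal{A}\mathcal{B}$ and $\mathcal{A}'\mathcal{B}$ are linear extensions of $P_{\lambda,\mu}$, hence by Proposition~\ref{linext} they are reduced words for the unique $\lambda$-minuscule element $w$ with $w\lambda = \mu$. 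These two reduced words share the length-$(\ell - k)$ suffix $s(\mathcal{B})$, so the complementary prefixes $s(\mathcal{A})$ and $s(\mathcal{A}')$ represent the same element of $W$; thus $s(\mathcal{A})\lambda = s(\mathcal{A}')\lambda$. Extending $\mathcal{A}$ to a linear extension of all of $P_{\lambda,\mu}$ and applying Proposition~\ref{everyred} furnishes a saturated chain in $\Lambda_{\lambda}$ running from $\lambda$ through $\phi(I)$ to $\mu$, so $\phi(I) \in \Lambda_{\lambda}^{\mu}$; and by Proposition~\ref{everysat}, $s(\mathcal{A})$ is a reduced word for the unique $\lambda$-minuscule element $w_I$ with $w_I\lambda = \phi(I)$, whence $|I| = \ell(w_I)$.

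Next I would dispatch cover-preservation and surjectivity. If $I \lessdot I'$ in $J(P_{\lambda,\mu})$, then $I' \setminus I = \{p\}$ for some element $p$; taking a linear extension of $I'$ that terminates at $p$ and restricting it to $I$ shows $\phi(I') = s_{i_p}\phi(I)$, and embedding this linear extension into one of all of $P_{\lambda,\mu}$ and invoking Proposition~\ref{everyred} identifies $\phi(I) \lessdot \phi(I')$ as a covering relation in $\Lambda_{\lambda}^{\mu}$. For surjectivity, let $\nu \in \Lambda_{\lambda}^{\mu}$; since the distributive lattice $\Lambda_{\lambda}$ is graded there is a saturated chain from $\lambda$ to $\nu$, which we extend to one from $\lambda$ to $\mu$. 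By Proposition~\ref{everysat} this chain corresponds to a reduced word for $w$, and by Proposition~\ref{linext} to a linear extension $\widetilde{\mathcal{A}}$ of $P_{\lambda,\mu}$; the initial segment of $\widetilde{\mathcal{A}}$ whose length is the rank of $\nu$ is an order ideal $I$ with $\phi(I) = \nu$. Run in reverse, starting from an order ideal $I$ and a covering relation $\phi(I) \lessdot \nu$ in $\Lambda_{\lambda}^{\mu}$, the same construction produces an order ideal covering $I$ whose image under $\phi$ is $\nu$; combined with injectivity, this yields cover-reflection.

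The main obstacle is injectivity, which I would establish by induction on $|I|$, using two properties of minuscule heaps. First, for an order ideal $K$ and a maximal element $p$ of $K$, one has $\phi(K \setminus \{p\}) \lessdot \phi(K)$, and by Proposition~\ref{cover} (together with the linear independence of the simple roots) the label of $p$ is pinned down by which down-cover of $\phi(K)$ in $\Lambda_{\lambda}$ arises; indeed, $p \mapsto \phi(K \setminus \{p\})$ is a bijection from the maximal elements of $K$ onto the down-covers of $\phi(K)$, its inverse recovering the maximal element whose label matches a prescribed down-cover --- this uses Proposition~\ref{everysat} to pass from a down-cover to a reduced word ending in the appropriate simple reflection. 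Second, in a minuscule heap any two elements bearing the same label are comparable (a consequence of Proposition~\ref{bourbaki}), so distinct maximal elements of $K$ carry distinct labels, as do distinct minimal elements of $P_{\lambda,\mu} \setminus K$; I would also record that the restriction of $P_{\lambda,\mu}$ to $K$ is the minuscule heap of $w_K$, up to the label-preserving isomorphism of Proposition~\ref{heapiso}, since $q <_{P_{\lambda,\mu}} r$ with $r \in K$ forces $q \in K$. Now suppose $\phi(I) = \phi(I') = \nu$. By uniqueness in Proposition~\ref{everysat}, $w_I = w_{I'}$, so $|I| = |I'|$; if this common value is $0$, then $I = I' = \varnothing$. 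Otherwise $\nu \neq \lambda$ (as $w_I$ is a nonidentity $\lambda$-minuscule element), so $\nu$ admits a down-cover $\eta$ in $\Lambda_{\lambda}$; by the first property, $\eta = \phi(I \setminus \{p\})$ for the unique maximal element $p$ of $I$ carrying the label read off from $\eta$, and likewise $\eta = \phi(I' \setminus \{p'\})$ for the corresponding maximal element $p'$ of $I'$. The inductive hypothesis gives $I \setminus \{p\} = I' \setminus \{p'\}$, and since $p$ and $p'$ are then identically labeled minimal elements of the complement of this common order ideal, the second property forces $p = p'$, hence $I = I'$.

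Putting the pieces together, $\phi$ is a well-defined bijection that preserves covering relations and, via the cover-lifting construction combined with injectivity, reflects them; for finite posets this is exactly what it means for $\phi$ to be an isomorphism $J(P_{\lambda,\mu}) \xrightarrow{\sim} \Lambda_{\lambda}^{\mu}$. The injectivity induction is where the genuine work lies; everything else is bookkeeping with reduced words and heap labels, underwritten by Propositions~\ref{bourbaki}, \ref{cover}, \ref{everyred}, \ref{everysat}, \ref{linext}, and \ref{heapiso}.
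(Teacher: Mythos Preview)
Your proof is correct, but it does considerably more work than the paper's. The paper's argument is essentially just your surjectivity step, reinterpreted as the construction of an inverse map: for $\nu \in \Lambda_{\lambda}^{\mu}$, pick a saturated chain from $\lambda$ through $\nu$ to $\mu$; the initial segment is a reduced word whose heap $P_{\lambda,(i_1,\ldots,i_k)}$ sits inside $P_{\lambda,(i_1,\ldots,i_\ell)}$ as a labeled order ideal, and Proposition~\ref{heapiso} identifies this with $P_{\lambda,\nu}$ inside $P_{\lambda,\mu}$. The map $\nu \mapsto P_{\lambda,\nu}$ is then declared to be order-preserving with inverse $\phi$, and the paper stops there. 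What this buys is brevity: once an inverse is in hand, well-definedness, injectivity, and cover-reflection come for free, so your induction on $|I|$ (and the supporting lemma that same-labeled heap elements are comparable) is unnecessary. What your approach buys is completeness: the paper does not actually verify that $\nu \mapsto P_{\lambda,\nu}$ is independent of the chosen chain and extension, nor that $\phi$ composed with it is the identity on either side, whereas you check every ingredient directly. Your well-definedness argument via shared suffixes and your injectivity induction are both sound; the appeal to Proposition~\ref{bourbaki} for comparability of same-labeled elements is a bit compressed but correct, since between two occurrences of $s_i$ in a reduced word for a $\lambda$-minuscule element some $s_j$ with $(\alpha_j,\alpha_i^{\vee}) \neq 0$ must intervene.
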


\begin{proof}
Let $\nu \in \Lambda_{\lambda}^{\mu}$.  Let \[\lambda \lessdot  s_{i_1} \lambda \lessdot \cdots \lessdot s_{i_k} s_{i_{k-1}} \cdots s_{i_1} \lambda = \nu\] be a saturated chain in $\Lambda_{\lambda}^{\mu}$ that extends to a maximal chain \[\lambda \lessdot s_{i_1} \lambda \lessdot \cdots \lessdot s_{i_{\ell}} s_{i_{\ell-1}} \cdots s_{i_1} \lambda = \mu.\]  Since the heap $P_{\lambda, (i_1, i_2, \ldots, i_k)}$ is a labeled order ideal of the heap $P_{\lambda, (i_1, i_2, \ldots, i_{\ell})}$, it follows from Proposition~\ref{heapiso} that the heap $P_{\lambda, \nu}$ is embedded as a labeled order ideal of the heap $P_{\lambda, \mu}$.  Thus, we obtain an order-preserving map $\Lambda_{\lambda}^{\mu} \rightarrow J(P_{\lambda, \mu})$, the inverse to which is given by $\phi$. 
\end{proof}

We conclude this section by defining minuscule heaps and recalling one of their governing properties.

\begin{df}
Let $V$ be a minuscule representation of $\mathfrak{g}$ with minuscule weight $\lambda$.  The heap $P_{\lambda, w_0 \lambda}$, which we denote by $P_{\lambda}$, is the \textit{minuscule heap} for $V$.  
\end{df}

\begin{thm}[Rush--Shi \cite{Rush}, Theorem 6.3] \label{rushshi}
Let $V$ be a minuscule representation of $\mathfrak{g}$ with minuscule weight $\lambda$, and let $P_{\lambda}$ be the minuscule heap for $V$.  For all $1 \leq i \leq t$, let $P_{\lambda}^i \subset P_{\lambda}$ be the set of elements of $P_{\lambda}$ labeled by $i$, and let $t_i := \prod_{p \in P_{\lambda}^i} t_p$.  Then the following diagram is commutative.  
\[\renewcommand{\arraystretch}{1.0}
\begin{array}[c]{ccc}
J(P_{\lambda}) & \stackrel{\phi}{\rightarrow} & \Lambda_{\lambda} \\
\downarrow \scriptstyle{t_i} && \downarrow \scriptstyle{s_i} \\
J(P_{\lambda}) & \stackrel{\phi}{\rightarrow} & \Lambda_{\lambda}
\end{array}
\]
\end{thm}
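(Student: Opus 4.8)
The plan is to check the identity $\phi \circ t_i = s_i \circ \phi$ pointwise: fixing $I \in J(P_\lambda)$ and writing $\mu := \phi(I)$, I aim to show that $\phi(t_i I) = \mu - (\mu, \alpha_i^\vee)\alpha_i = s_i \mu$. The strategy is to transport the \emph{local} description of the Fon-Der-Flaass action --- toggling at the elements of $P_\lambda$ labeled by $i$ --- across the poset isomorphism $\phi$ of Theorem~\ref{structure}, which recasts statements about toggles as statements about covering relations in $\Lambda_\lambda$, and then to read off the behaviour of $s_i$ from the defining property of $\phi$ (cf.\ Proposition~\ref{cover}).

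Two preliminaries are required. \textbf{(a)} No covering relation of $P_\lambda$ joins two elements carrying the same label: if $p \lessdot p'$ were such a pair with $\alpha(p) = \alpha(p') = \alpha_i$, then some linear extension of $P_\lambda$ would place $p$ and $p'$ consecutively, and Proposition~\ref{linext} would thereby exhibit a reduced word containing the factor $s_i s_i$, which is absurd. Since two toggles commute unless the underlying elements form a covering relation, $\{t_p : p \in P_\lambda^i\}$ is a commuting family, so $t_i = \prod_{p \in P_\lambda^i} t_p$ is well defined independently of the order of composition. \textbf{(b)} As $V^\lambda$ is minuscule, every weight of $\Lambda_\lambda$ has the same length as $\lambda$; comparing $|\mu \pm \alpha_i|^2$ with $|\mu|^2$ thus shows that $\mu - \alpha_i \in \Lambda_\lambda$ if and only if $(\mu, \alpha_i^\vee) = 1$, and $\mu + \alpha_i \in \Lambda_\lambda$ if and only if $(\mu, \alpha_i^\vee) = -1$; moreover, in these cases $\mu \lessdot \mu - \alpha_i$, respectively $\mu + \alpha_i \lessdot \mu$, is a covering relation in $\Lambda_\lambda$, since a simple root cannot be split as a sum of two nonzero nonnegative combinations of simple roots.

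Now transport these facts across $\phi$, whose defining property (Theorem~\ref{structure}) gives $\phi(I \cup \{p\}) = s_i \phi(I)$ whenever $p \in P_\lambda^i$ and $I \cup \{p\}$ is an order ideal. An element $p \in P_\lambda^i$ is toggleable into $I$ precisely when $I \lessdot I \cup \{p\}$ is a cover in $J(P_\lambda)$, which $\phi$ carries to a cover $\mu \lessdot \mu - \alpha_i$ in $\Lambda_\lambda$, forcing $\mu - \alpha_i \in \Lambda_\lambda$ and hence $(\mu, \alpha_i^\vee) = 1$ by \textbf{(b)}; symmetrically, $p$ is toggleable out of $I$ precisely when $\phi$ sends $I \setminus \{p\} \lessdot I$ to $\mu + \alpha_i \lessdot \mu$, forcing $\mu + \alpha_i \in \Lambda_\lambda$ and hence $(\mu, \alpha_i^\vee) = -1$. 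Since $\phi$ is a bijection, at most one element of $P_\lambda^i$ is active at $I$; and by \textbf{(b)} together with $(\mu, \alpha_i^\vee) \in \{-1, 0, 1\}$ (Proposition~\ref{bourbaki}), whether the active element is toggled in, toggled out, or absent is governed exactly by whether $(\mu, \alpha_i^\vee)$ equals $1$, $-1$, or $0$. Denoting the active element by $p$ when it exists and composing the commuting toggles with $t_p$ applied last --- so that every other $t_q$, $q \in P_\lambda^i$, acts at $I$, where it is inactive --- we obtain $t_i I = I \cup \{p\}$, $I \setminus \{p\}$, or $I$ in the respective cases. In all three, $\phi(t_i I) = s_i \mu$ by the defining property of $\phi$ (in the last case $t_i I = I$ and $s_i \mu = \mu$), which is the assertion.

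The crux is preliminary \textbf{(b)}: it is precisely here that the minuscule hypothesis is indispensable, for when $V^\lambda$ is not minuscule the shifted weight $\mu \pm \alpha_i$ can belong to the weight diagram even though $(\mu, \alpha_i^\vee) = 0$, collapsing the trichotomy on which the remainder of the argument rests. A secondary point meriting care is the bookkeeping for the product $t_i = \prod_p t_p$: one must isolate the single active toggle, and it is exactly the order-independence from \textbf{(a)} that licenses doing so.
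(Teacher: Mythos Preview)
The paper does not prove this theorem: it is quoted verbatim from Rush--Shi \cite{Rush}, Theorem 6.3, and no proof is supplied here. There is therefore nothing in the present paper to compare your argument against.

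That said, your argument is correct and self-contained. The two preliminaries do the real work: \textbf{(a)} justifies that $t_i$ is well defined, and \textbf{(b)} sets up the trichotomy $(\mu,\alpha_i^\vee)\in\{1,-1,0\}$ $\Leftrightarrow$ (exactly one $p\in P_\lambda^i$ is toggleable in / out / none is active). One expository quibble in \textbf{(b)}: the length comparison $|\mu-\alpha_i|^2=|\mu|^2$ only yields the implication $\mu-\alpha_i\in\Lambda_\lambda \Rightarrow (\mu,\alpha_i^\vee)=1$; the converse is not a length argument but rather the observation that $(\mu,\alpha_i^\vee)=1$ gives $\mu-\alpha_i=s_i\mu\in W\lambda=\Lambda_\lambda$. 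You are clearly aware of this, but the sentence as written suggests the length computation handles both directions. Similarly, in the main paragraph you should make explicit that when $(\mu,\alpha_i^\vee)=1$ the cover $\mu\lessdot\mu-\alpha_i$ pulls back under $\phi$ to a cover $I\lessdot I\cup\{p\}$, and that the defining property of $\phi$ (Theorem~\ref{structure}) then forces the label of $p$ to be $i$; this is the step that guarantees an active element \emph{exists} in $P_\lambda^i$, not merely that at most one does. With those two clarifications the proof reads cleanly.
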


\section{Proofs of Theorems 1.2 and 1.4}
In this section, we prove our main theorems.  We begin with three lemmas.  

\begin{lem} \label{plusminus}
Let $V$ be a minuscule representation of $\mathfrak{g}$ with minuscule weight $\lambda$ and minuscule heap $P_{\lambda}$.  Let $I \in J(P_{\lambda})$ be an order ideal.  Then $(\phi(I), \alpha_i^{\vee}) = 1$ if and only if $(\phi(\Psi(I)), \alpha_i^{\vee}) = -1$.  
\end{lem}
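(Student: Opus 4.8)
The plan is to convert the condition $(\phi(I),\alpha_i^{\vee})=1$ into the purely combinatorial statement that some element of $P_\lambda^i$ can be toggled into $I$, to invoke the local description of the Fon-Der-Flaass action (which exchanges ``addable to $I$'' with ``removable from $\Psi(I)$''), and then to read the latter back off as $(\phi(\Psi(I)),\alpha_i^{\vee})=-1$.

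Accordingly, I would first isolate a translation statement: for every order ideal $J\in J(P_\lambda)$ and every $1\le i\le t$, one has $(\phi(J),\alpha_i^{\vee})=1$ if and only if there exists $p\in P_\lambda^i$ with $t_p(J)=J\cup\{p\}$, and $(\phi(J),\alpha_i^{\vee})=-1$ if and only if there exists $p\in P_\lambda^i$ with $t_p(J)=J\setminus\{p\}$. For the first equivalence, if $(\phi(J),\alpha_i^{\vee})=1$ then Theorem~\ref{rushshi} gives $\phi(t_i(J))=s_i\phi(J)=\phi(J)-\alpha_i$; since $\phi(J)\lessdot\phi(J)-\alpha_i$ is a covering relation in $\Lambda_\lambda$ and $\phi$ is a poset isomorphism, $J\lessdot t_i(J)$ is a covering relation in $J(P_\lambda)$, whence $t_i(J)=J\cup\{p\}$ for a unique $p$, which lies in $P_\lambda^i$ because $t_i$ toggles only at elements labeled $i$. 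Conversely, if $p\in P_\lambda^i$ is addable to $J$, then taking a linear extension of $J\cup\{p\}$ that terminates at $p$ and applying Theorem~\ref{structure} gives $\phi(J\cup\{p\})=s_i\phi(J)$; as $J\lessdot J\cup\{p\}$ is a covering relation, so is $\phi(J)\lessdot s_i\phi(J)$ in $\Lambda_\lambda$, so $s_i\phi(J)=\phi(J)-\alpha_j$ for some $j$, and comparison with $s_i\phi(J)=\phi(J)-(\phi(J),\alpha_i^{\vee})\alpha_i$ forces $j=i$ and $(\phi(J),\alpha_i^{\vee})=1$ since distinct simple roots are linearly independent. The second equivalence is the mirror image, using that adjoining an element labeled $i$ subtracts $\alpha_i$ and that Proposition~\ref{cover} applies to the covering relation $\phi(J)+\alpha_i\lessdot\phi(J)$.

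Granting the translation, the lemma drops out in a line. By the local description of $\Psi$, for every $p\in P_\lambda$ we have $t_p(I)=I\cup\{p\}$ if and only if $t_p(\Psi(I))=\Psi(I)\setminus\{p\}$. Hence $(\phi(I),\alpha_i^{\vee})=1$ iff some $p\in P_\lambda^i$ satisfies $t_p(I)=I\cup\{p\}$ iff some $p\in P_\lambda^i$ satisfies $t_p(\Psi(I))=\Psi(I)\setminus\{p\}$ iff $(\phi(\Psi(I)),\alpha_i^{\vee})=-1$, where the first and third equivalences are the two halves of the translation (the third applied to the order ideal $\Psi(I)$) and the middle one uses both directions of the local description.

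The subtle point is entirely within the translation: the assertion that the composite toggle $t_i$ displaces $J$ by at most one element of $P_\lambda^i$. One is tempted to prove this by expressing $t_i(J)$ as the symmetric difference of $J$ with the set of toggleable elements of $P_\lambda^i$, but that argument needs $P_\lambda^i$ to be an antichain in $P_\lambda$, which can fail---already in $[2]\times[2]=P_{\omega_2}$ for $\mathfrak{sl}_4$, the minimum and maximum both carry the label $\alpha_2$ and are comparable. The route above avoids unwinding the anatomy of $t_i$ at all: it simply feeds $\phi(t_i(J))=s_i\phi(J)$ into the facts that $\phi$ is a poset isomorphism onto $\Lambda_\lambda$ and that covering relations in $\Lambda_\lambda$ differ by a single simple root (as in Proposition~\ref{cover}).
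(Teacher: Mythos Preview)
Your proof is correct and follows essentially the same route as the paper: use Theorem~\ref{rushshi} to convert $(\phi(I),\alpha_i^{\vee})=1$ into the statement that $t_i(I)$ covers $I$ (hence a unique $p\in P_\lambda^i$ is addable), invoke the local description of $\Psi$ to make $p$ removable from $\Psi(I)$, and then run the same translation backward for $\Psi(I)$. Your version is slightly more careful than the paper's---you isolate the two-way translation as a standalone claim and thereby handle both directions of the ``if and only if'' cleanly, whereas the paper writes out only the forward implication.
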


\begin{proof}
Suppose that $(\phi(I), \alpha_i^{\vee}) = 1$.  Then $s_i(\phi(I))$ covers $\phi(I)$ in $\Lambda_{\lambda}$, so, by Theorem~\ref{rushshi}, we see that $t_i(I)$ covers $I$ in $J(P_{\lambda})$.  It follows that there exists exactly one element $p \in P_{\lambda}^i$ such that toggling $I$ at $p$ yields $I \cup \lbrace p \rbrace$, and that toggling $I$ at any $q \in P_{\lambda}^i$ for which $q \neq p$ returns $I$.  

Therefore, toggling $\Psi(I)$ at $p$ yields $\Psi(I) \setminus \lbrace p \rbrace$, and toggling $\Psi(I)$ at any $q \in P_{\lambda}^i$ for which $q \neq p$ returns $\Psi(I)$.  It follows $\Psi(I)$ covers $t_i(\Psi(I))$ in $J(P_{\lambda})$.  We may conclude that $\phi(\Psi(I))$ covers $s_i(\phi(\Psi(I)))$ in $\Lambda_{\lambda}$, which implies that $(\phi(\Psi(I)), \alpha_i^{\vee}) = -1$, as desired. 
\end{proof}

\begin{lem} \label{invertible}
Let $\pi \colon \mathfrak{h}_{\mathbb{R}}^* \rightarrow \mathfrak{h}_{\mathbb{R}}^*$ be defined by $\theta \mapsto \sum_{i=1}^t (\theta, \alpha_i^{\vee}) \alpha_i^{\vee}$.  Then $\pi$ is an automorphism of $\mathfrak{h}_{\mathbb{R}}^*$.  
\end{lem}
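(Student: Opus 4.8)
The plan is to recognize $\pi$ as a self-adjoint, positive-definite linear endomorphism of the finite-dimensional inner product space $\mathfrak{h}_{\mathbb{R}}^*$, from which it follows at once that $\pi$ is an automorphism. Linearity of $\pi$ is immediate from the bilinearity of $(\cdot, \cdot)$, and the image plainly lands in $\mathfrak{h}_{\mathbb{R}}^*$ since it is a linear combination of the $\alpha_i^{\vee}$. So the entire content is the verification that $\ker \pi = 0$, which for a linear self-map of a finite-dimensional space is equivalent to bijectivity.

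First I would check self-adjointness by the direct computation
\[
(\pi(\theta), \psi) = \sum_{i=1}^t (\theta, \alpha_i^{\vee})(\alpha_i^{\vee}, \psi) = (\theta, \pi(\psi))
\]
for all $\theta, \psi \in \mathfrak{h}_{\mathbb{R}}^*$. Specializing to $\psi = \theta$ gives $(\pi(\theta), \theta) = \sum_{i=1}^t (\theta, \alpha_i^{\vee})^2 \geq 0$, so $\pi$ is positive semidefinite, and equality holds only when $(\theta, \alpha_i^{\vee}) = 0$ for every $1 \leq i \leq t$.

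To conclude, suppose $\pi(\theta) = 0$. Then $\sum_{i=1}^t (\theta, \alpha_i^{\vee})^2 = (\pi(\theta), \theta) = 0$, so $(\theta, \alpha_i^{\vee}) = 0$ for all $i$. By Corollary~\ref{cobasis}, the simple coroots $\Pi^{\vee}$ span $\mathfrak{h}_{\mathbb{R}}^*$, so $\theta$ is orthogonal to all of $\mathfrak{h}_{\mathbb{R}}^*$; since $(\cdot, \cdot)$ is positive-definite, this forces $\theta = 0$. Hence $\pi$ is injective, and therefore an automorphism. (Equivalently, one may observe that $\theta \mapsto \big((\theta, \alpha_1^{\vee}), \ldots, (\theta, \alpha_t^{\vee})\big)$ is the coordinate isomorphism $\mathfrak{h}_{\mathbb{R}}^* \to \mathbb{R}^t$ dual to the fundamental weight basis, and $\pi$ is this map followed by the isomorphism $\mathbb{R}^t \to \mathfrak{h}_{\mathbb{R}}^*$ sending the standard basis to $\Pi^{\vee}$.)

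There is no genuine obstacle here: the argument is formal once one invokes Corollary~\ref{cobasis} to rule out a nonzero functional orthogonal to every simple coroot. It is worth flagging that this is the only place the hypotheses enter — if the sum defining $\pi$ ran over a proper subset of $\Pi^{\vee}$, the map would acquire a kernel — so the appeal to Corollary~\ref{cobasis} is essential rather than cosmetic.
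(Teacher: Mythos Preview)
Your proof is correct. The paper's argument is the one-line observation that $\pi(\omega_j) = \sum_{i=1}^t (\omega_j, \alpha_i^{\vee}) \alpha_i^{\vee} = \alpha_j^{\vee}$, so $\pi$ carries the basis of fundamental weights to the basis of simple coroots and is therefore invertible. Your main argument takes a different route---self-adjointness and positive-definiteness of $\pi$, invoking Corollary~\ref{cobasis} to force a trivial kernel---which is sound but somewhat more elaborate than necessary here. Your parenthetical remark at the end, however, \emph{is} essentially the paper's proof: factoring $\pi$ as the coordinate map in the fundamental-weight basis followed by the coordinate-to-coroot map is just another way of saying $\omega_j \mapsto \alpha_j^{\vee}$. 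The positivity argument has the mild advantage of generalizing (it would show that $\theta \mapsto \sum_{v \in S} (\theta, v)v$ is an automorphism for any spanning set $S$), whereas the basis-to-basis argument is shorter and also yields $\pi^{-1}$ explicitly, which is what the paper actually uses downstream when it writes $\pi(\theta_i) = 2\omega_i/(\alpha_i,\alpha_i)$.
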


\begin{proof}
Since $\pi$ maps the basis of fundamental weights $\lbrace \omega_1, \omega_2, \ldots, \omega_t \rbrace$ to the biorthogonal basis of simple coroots $\lbrace \alpha_1^{\vee}, \alpha_2^{\vee}, \ldots, \alpha_t^{\vee} \rbrace$, the desired result follows immediately.  
\end{proof}

\begin{lem} \label{ident}
The map $\mathfrak{h}_{\mathbb{R}}^* \rightarrow \mathfrak{h}_{\mathbb{R}}^*$ defined by $\theta \mapsto \sum_{i=1}^t (\theta, \alpha_i^{\vee}) \omega_i$ is the identity on $\mathfrak{h}_{\mathbb{R}}^*$.  
\end{lem}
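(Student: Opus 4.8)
The plan is to observe that the asserted map is linear and therefore determined by its values on a basis; the natural basis to test is the basis of fundamental weights $\lbrace \omega_1, \omega_2, \ldots, \omega_t \rbrace$, since these interact cleanly with the pairings $(\cdot, \alpha_i^\vee)$ appearing in the definition. Concretely, write $\Theta \colon \mathfrak{h}_{\mathbb{R}}^* \rightarrow \mathfrak{h}_{\mathbb{R}}^*$ for the map $\theta \mapsto \sum_{i=1}^t (\theta, \alpha_i^\vee) \omega_i$. Both $\theta \mapsto (\theta, \alpha_i^\vee)$ (for each fixed $i$) and the formation of the resulting linear combination of the $\omega_i$ are linear in $\theta$, so $\Theta$ is linear. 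By Proposition~\ref{rootbasis} (or equivalently because the fundamental weights form a basis for $\mathfrak{h}_{\mathbb{R}}^*$, being the biorthogonal basis to the simple coroots, which span by Corollary~\ref{cobasis}), it suffices to check $\Theta(\omega_j) = \omega_j$ for all $1 \leq j \leq t$.

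First I would substitute $\theta = \omega_j$ into the defining formula and invoke Definition~\ref{fundamental}, which gives $(\omega_j, \alpha_i^\vee) = \delta_{ij}$. Hence
\[
\Theta(\omega_j) = \sum_{i=1}^t (\omega_j, \alpha_i^\vee)\, \omega_i = \sum_{i=1}^t \delta_{ij}\, \omega_i = \omega_j.
\]
Since $\Theta$ is a linear map agreeing with the identity on the basis $\lbrace \omega_1, \ldots, \omega_t \rbrace$, we conclude $\Theta = \mathrm{id}_{\mathfrak{h}_{\mathbb{R}}^*}$, as claimed.

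There is no real obstacle here: the lemma is precisely the statement that the fundamental weights are dual to the simple coroots, repackaged as an operator identity, and the only point requiring any care is the appeal to the fact that $\lbrace \omega_i \rbrace$ is a basis (so that equality on this set forces equality of the linear maps). This is exactly parallel to the argument for Lemma~\ref{invertible}, where the map $\pi$ was identified by its action sending $\omega_i \mapsto \alpha_i^\vee$; here the analogous bookkeeping sends $\omega_i \mapsto \omega_i$.
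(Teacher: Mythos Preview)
Your proof is correct and follows essentially the same approach as the paper: the paper's one-line argument observes that the identity is the unique endomorphism of $\mathfrak{h}_{\mathbb{R}}^*$ fixing each fundamental weight, which is exactly what you verify explicitly via $(\omega_j,\alpha_i^{\vee})=\delta_{ij}$. Your version simply unpacks the same idea in more detail.
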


\begin{proof}
The identity is the unique endomorphism of $\mathfrak{h}_{\mathbb{R}}^*$ that preserves each fundamental weight.  
\end{proof}

\subsection{Order Ideal Cardinality}

\begin{lem} \label{whatisordercard}
Let $V$ be a minuscule representation of $\mathfrak{g}$ with minuscule weight $\lambda$ and minuscule heap $P_{\lambda}$.  Let $f^i \colon J(P_{\lambda}) \rightarrow \mathbb{R}$ be defined by $I \mapsto |I \cap P_{\lambda}^i|$.  Then $f^i(I) = 2 \frac{(\lambda, \omega_i) - (\phi(I), \omega_i)}{(\alpha_i, \alpha_i)}$.   
\end{lem}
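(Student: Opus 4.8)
The plan is to track how the quantity $f^i(I) = |I \cap P_\lambda^i|$ relates to the weight $\phi(I)$ by counting, across a saturated chain from $\varnothing$ to $I$, the signed contributions of toggles labeled $i$. The key observation, furnished by Theorem~\ref{rushshi}, is that applying $t_i$ corresponds to applying $s_i$ on the weight side, and $s_i$ changes the $\alpha_i^\vee$-inner product in a controlled way via the reflection formula $s_i(\mu) = \mu - (\mu,\alpha_i^\vee)\alpha_i$. So the strategy is: express $\lambda - \phi(I)$ as a nonnegative integer combination $\sum_j c_j \alpha_j$ of simple roots (possible since $\phi(I) \leq \lambda$ in the root order sense used here), identify $c_i$ with $f^i(I)$, and then extract $c_i$ from the inner products.

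First I would argue that $c_i = f^i(I)$. Pick a linear extension $(p_1,\dots,p_\ell)$ of $I$; then $\phi(I) = s_{\alpha(p_\ell)}\cdots s_{\alpha(p_1)}(\lambda)$, and by Proposition~\ref{cover} each step subtracts exactly the simple root $\alpha(p_k)$ labeling $p_k$. Hence $\lambda - \phi(I) = \sum_{k=1}^{\ell} \alpha(p_k)$, and the coefficient of $\alpha_i$ in this sum is precisely the number of $p_k$ bearing the label $i$, i.e. $|I \cap P_\lambda^i| = f^i(I)$. So $\lambda - \phi(I) = \sum_{j=1}^t f^j(I)\,\alpha_j$.

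Next I would recover $f^i(I)$ from this expression by pairing against the right functional. Writing $\theta := \lambda - \phi(I) = \sum_j f^j(I)\alpha_j$, I want to isolate the $i$-th coefficient. Pairing with $\alpha_i^\vee$ alone does not work because the simple roots are not $\alpha^\vee$-orthogonal; instead I apply Lemma~\ref{ident}, which says $\theta = \sum_j (\theta,\alpha_j^\vee)\omega_j$. Comparing the two expansions of $\theta$ — one in the basis of simple roots with coefficients $f^j(I)$, the other in the basis of fundamental weights with coefficients $(\theta,\alpha_j^\vee)$ — is not yet a direct match, so the cleaner route is: the map $\theta \mapsto \theta$ has $i$-th fundamental-weight coordinate $(\theta,\alpha_i^\vee)$, while $\sum_j f^j(I)\alpha_j$ has $i$-th fundamental-weight coordinate $\sum_j f^j(I)(\alpha_j,\alpha_i^\vee)$. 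Actually the quickest finish uses the inner product with $\omega_i$ directly: $(\theta,\omega_i) = \sum_j f^j(I)(\alpha_j,\omega_i)$, and since $(\alpha_j,\omega_i) = (\omega_i,\alpha_j) = \tfrac{(\alpha_j,\alpha_j)}{2}(\omega_i,\alpha_j^\vee) = \tfrac{(\alpha_j,\alpha_j)}{2}\delta_{ij}$, only the $j=i$ term survives, giving $(\theta,\omega_i) = f^i(I)\tfrac{(\alpha_i,\alpha_i)}{2}$. Rearranging yields $f^i(I) = 2\frac{(\lambda - \phi(I),\omega_i)}{(\alpha_i,\alpha_i)} = 2\frac{(\lambda,\omega_i) - (\phi(I),\omega_i)}{(\alpha_i,\alpha_i)}$, as claimed.

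The main obstacle is being careful that $\phi(I) \leq \lambda$ so that $\lambda - \phi(I)$ really is a nonnegative combination of simple roots with the combinatorially meaningful coefficients — but this is exactly what the saturated-chain description of $\phi$ (via Proposition~\ref{everysat} and Theorem~\ref{structure}) guarantees, since every covering step in $\Lambda_\lambda$ going up from $\lambda$ subtracts a single simple root. Everything else is the elementary linear algebra of pairing simple roots against fundamental weights, and Lemmas~\ref{invertible} and \ref{ident} are arranged precisely to package that bookkeeping; I expect no real difficulty beyond keeping the normalization factor $(\alpha_i,\alpha_i)/2$ straight.
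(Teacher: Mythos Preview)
Your proposal is correct and follows essentially the same route as the paper: write $\lambda - \phi(I) = \sum_j f^j(I)\,\alpha_j$ from the saturated-chain description of $\phi$ (the paper cites Theorem~\ref{structure} directly for this), then pair with $\omega_i$ and use $(\alpha_j,\omega_i) = \tfrac{(\alpha_j,\alpha_j)}{2}\delta_{ij}$ to isolate the $i$-th coefficient. The only difference is cosmetic --- your brief detour through Lemma~\ref{ident} before settling on the direct pairing with $\omega_i$ is unnecessary, and the paper skips straight to rewriting $\alpha_j = \tfrac{(\alpha_j,\alpha_j)}{2}\alpha_j^\vee$ and invoking the biorthogonality $(\alpha_j^\vee,\omega_i)=\delta_{ij}$.
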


\begin{proof}
From Theorem~\ref{structure}, we see that 
\begin{align*}
\phi(I) & = \lambda - f^1(I) \alpha_1 - f^2(I) \alpha_2 - \cdots - f^t(I) \alpha_t \\& = \lambda - f^1(I) \alpha_1^{\vee} \frac{(\alpha_1, \alpha_1)}{2} - f^2(I) \alpha_2^{\vee} \frac{(\alpha_2, \alpha_2)}{2} - f^t(I) \alpha_t^{\vee} \frac{(\alpha_t, \alpha_t)}{2}. 
\end{align*}
Hence \[(\phi(I), \omega_i) = (\lambda_i, \omega_i) - f^i(I) \frac{(\alpha_i, \alpha_i)}{2},\] so \[f^i(I) = 2 \frac{(\lambda_i, \omega_i) - (\phi(I), \omega_i)}{(\alpha_i, \alpha_i)}.\] 
\end{proof}

We proceed to the proof of Theorem~\ref{mainorder}.  It follows from Lemma~\ref{invertible} that we may choose $\theta_1, \theta_2, \ldots, \theta_t \in \mathfrak{h}_{\mathbb{R}}^*$ so that $\pi(\theta_i) = 2 \frac{\omega_i}{(\alpha_i, \alpha_i)}$ for all $1 \leq j \leq t$.  Then \[2 \frac{(\phi(I), \omega_i)}{(\alpha_i, \alpha_i)} = (\phi(I), \pi(\theta_i)) = \sum_{j=1}^t (\phi(I), \alpha_j^{\vee})(\theta_i, \alpha_j^{\vee}).\] 

Let $m$ be the order of the Fon-Der-Flaass action $\Psi$ on $P_{\lambda}$.  Note that 
\begin{align*}
\sum_{k=0}^{m-1} 2 \frac{(\phi(\Psi^k(I)), \omega_i)}{(\alpha_i, \alpha_i)} &= \sum_{k=0}^{m-1} \sum_{j=1}^t (\phi(\Psi^k(I)), \alpha_j^{\vee}) (\theta_i, \alpha_j^{\vee}) \\ & = \sum_{j=1}^t (\theta_i, \alpha_j^{\vee}) \left( \sum_{k=0}^{m-1} (\phi(\Psi^k(I)), \alpha_j^{\vee}) \right) \\ & = 0,
\end{align*}

where the equality $\sum_{k=0}^{m-1} (\phi(\Psi^k(I)), \alpha_j^{\vee}) = 0$ follows from Lemma~\ref{plusminus}.  

It is now an immediate consequence of Lemma~\ref{whatisordercard} that $f^i$ is $c$-mesic with $c = 2 \frac{(\lambda, \omega_i)}{(\alpha_i, \alpha_i)}$.  \qed

\subsection{Antichain Cardinality}

\begin{lem}
Let $V$ be a minuscule representation of $\mathfrak{g}$ with minuscule weight $\lambda$ and minuscule heap $P_{\lambda}$.  Let $g^i \colon J(P_{\lambda}) \rightarrow \mathbb{R}$ be the cardinality of the set $\lbrace p \in P_{\lambda}^i : t_p(I) = I \setminus \lbrace p \rbrace \rbrace$.  Then \[\sum_{k=0}^{m-1} g^i(\Psi^k(I)) = \sum_{k=0}^{m-1} 2 \frac{(\phi(\Psi^k(I)), \alpha_i^{\vee}) (\phi(\Psi^k(I)), \omega_i)}{(\alpha_i, \alpha_i)}.\] 
\end{lem}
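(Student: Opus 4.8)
The plan is to reduce the claimed identity to two facts about a single application of the Fon-Der-Flaass action and then combine them with Lemmas~\ref{plusminus} and \ref{whatisordercard} in a telescoping sum over the orbit of $I$. The first fact pins down $g^i$ purely in terms of the integer $(\phi(I), \alpha_i^{\vee})$; the second records how $f^i(I) := |I \cap P_{\lambda}^i|$ changes under $\Psi$ when $(\phi(I), \alpha_i^{\vee}) = 1$. The first of these is where the content lies, and everything afterward is bookkeeping.

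To establish the first fact, I would show that $g^i(I) = 1$ if $(\phi(I), \alpha_i^{\vee}) = -1$ and $g^i(I) = 0$ otherwise. By Definition~\ref{heap}, two heap elements sharing a label are never incomparable, so $P_{\lambda}^i$ is a chain $p_1 < p_2 < \cdots < p_r$; hence $I \cap P_{\lambda}^i = \{p_1, \ldots, p_s\}$ for some $s$, and the only element of $P_{\lambda}^i$ that can be a maximal element of $I$ is the top element $p_s$ of $I \cap P_{\lambda}^i$ (when $s \geq 1$), so $g^i(I) \in \{0, 1\}$. If $p_s$ is a maximal element of $I$, then $I \setminus \{p_s\} \in J(P_{\lambda})$, and by Theorem~\ref{structure} together with the heap labeling $\phi(I \setminus \{p_s\}) = \phi(I) + \alpha_i$; since $(\phi(I) + \alpha_i, \alpha_i^{\vee}) = (\phi(I), \alpha_i^{\vee}) + 2$ and Proposition~\ref{bourbaki} confines both $(\phi(I), \alpha_i^{\vee})$ and $(\phi(I) + \alpha_i, \alpha_i^{\vee})$ to $\{-1, 0, 1\}$, we are forced to conclude $(\phi(I), \alpha_i^{\vee}) = -1$. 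The same computation shows that an element of $P_{\lambda}^i$ can be a minimal element of $P_{\lambda} \setminus I$ only when $(\phi(I), \alpha_i^{\vee}) = 1$; in particular no element of $P_{\lambda}^i$ can be toggled into $I$ while another is toggled out. Conversely, suppose $(\phi(I), \alpha_i^{\vee}) = -1$. Then $s_i(\phi(I)) \neq \phi(I)$, so $t_i(I) \neq I$ by Theorem~\ref{rushshi}; since $t_i$ changes $I$ only by toggling elements of $P_{\lambda}^i$, the first toggle in $t_i = \prod_{p \in P_{\lambda}^i} t_p$ that alters the ideal does so at the ideal $I$ itself, so $t_p(I) \neq I$ for some $p \in P_{\lambda}^i$. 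Such a $p$ cannot be toggled into $I$ (that would force $(\phi(I), \alpha_i^{\vee}) = 1$), so it is a maximal element of $I$, necessarily $p = p_s$; hence $g^i(I) = 1$.

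For the second fact, suppose $(\phi(I), \alpha_i^{\vee}) = 1$. Then Theorem~\ref{rushshi} gives $\phi(t_i(I)) = s_i(\phi(I)) = \phi(I) - \alpha_i$, and since $\phi(I)$ and $\phi(I) - \alpha_i$ differ by the simple root $\alpha_i$, they are comparable in $\Lambda_{\lambda}$ with $\phi(I)$ the smaller (the order on $\Lambda_{\lambda}$ is opposite to the root order). As $\phi$ is an order isomorphism and $J(P_{\lambda})$ is ordered by inclusion, $t_i(I) \supsetneq I$; and since $t_i$ toggles only elements of $P_{\lambda}^i$, the set $t_i(I) \setminus I$ is a nonempty subset of $P_{\lambda}^i \setminus I = \{p_{s+1}, \ldots, p_r\}$, so $p_{s+1}$ exists and is the minimum of $t_i(I) \setminus I$. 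Every element strictly below $p_{s+1}$ lies in the order ideal $t_i(I)$; if such an element lies in $P_{\lambda}^i$ it is among $p_1, \ldots, p_s$ and hence lies in $I$, and if it lies outside $P_{\lambda}^i$ it is unaffected by $t_i$ and hence also lies in $I$. Therefore $I \cup \{p_{s+1}\} \in J(P_{\lambda})$, so $p_{s+1}$ is a minimal element of $P_{\lambda} \setminus I$, hence a maximal element of $\Psi(I)$; this forces $\Psi(I) \cap P_{\lambda}^i = \{p_1, \ldots, p_{s+1}\}$, and so $f^i(\Psi(I)) = f^i(I) + 1$.

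To assemble the proof, write $\mu_k := \phi(\Psi^k(I))$ and $a_k := f^i(\Psi^k(I))$. By Lemma~\ref{whatisordercard}, $2 \frac{(\mu_k, \omega_i)}{(\alpha_i, \alpha_i)} = 2 \frac{(\lambda, \omega_i)}{(\alpha_i, \alpha_i)} - a_k$, so the right-hand side of the claimed identity equals $\sum_{k=0}^{m-1} (\mu_k, \alpha_i^{\vee}) \left( 2 \frac{(\lambda, \omega_i)}{(\alpha_i, \alpha_i)} - a_k \right)$; since $\sum_{k=0}^{m-1} (\mu_k, \alpha_i^{\vee}) = 0$ by Lemma~\ref{plusminus}, this equals $- \sum_{k=0}^{m-1} (\mu_k, \alpha_i^{\vee}) a_k$. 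By Lemma~\ref{plusminus} the map $k \mapsto k + 1 \pmod m$ is a bijection from $\{k : (\mu_k, \alpha_i^{\vee}) = 1\}$ onto $\{k : (\mu_k, \alpha_i^{\vee}) = -1\}$, so $- \sum_{k=0}^{m-1} (\mu_k, \alpha_i^{\vee}) a_k = \sum_{k : (\mu_k, \alpha_i^{\vee}) = 1} (a_{k+1} - a_k)$, and by the second fact each summand here equals $1$; hence the right-hand side equals $\#\{k : (\mu_k, \alpha_i^{\vee}) = 1\} = \#\{k : (\mu_k, \alpha_i^{\vee}) = -1\}$. By the first fact this last count is exactly $\sum_{k=0}^{m-1} g^i(\Psi^k(I))$, the left-hand side. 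The main obstacle is the first fact --- recognizing that $g^i(I)$ is determined entirely by $(\phi(I), \alpha_i^{\vee})$ --- which rests on the chain structure of $P_{\lambda}^i$ and on the fact, due to Proposition~\ref{bourbaki}, that no weight of $\Lambda_{\lambda}$ pairs to $\pm 2$ with a simple coroot.
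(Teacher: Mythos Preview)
Your proof is correct and follows essentially the same route as the paper's: both arguments pair the index $k$ with $k+1$ via Lemma~\ref{plusminus}, reduce the right-hand side to the count $\#\{k : (\phi(\Psi^k(I)),\alpha_i^\vee)=\pm 1\}$, and identify that count with $\sum_k g^i(\Psi^k(I))$. Your write-up is in fact more complete than the paper's --- you explicitly prove the two ingredients the paper leaves implicit, namely that $g^i(I)\in\{0,1\}$ with $g^i(I)=1$ exactly when $(\phi(I),\alpha_i^\vee)=-1$, and that $f^i(\Psi(I))=f^i(I)+1$ when $(\phi(I),\alpha_i^\vee)=1$ (the paper uses the latter in the line $2\frac{(\phi(I),\omega_i)}{(\alpha_i,\alpha_i)} - 2\frac{(\phi(\Psi(I)),\omega_i)}{(\alpha_i,\alpha_i)} = 2\frac{(\alpha_i,\omega_i)}{(\alpha_i,\alpha_i)}$ without comment). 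One small quibble: your appeal to Definition~\ref{heap} for the claim that $P_\lambda^i$ is a chain is delicate, since a reflection literally commutes with itself; the cleanest justification is that if two same-labeled heap elements were incomparable, some linear extension would place them adjacently, contradicting Proposition~\ref{linext} (the resulting word would contain $s_i s_i$ and fail to be reduced).
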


\begin{proof}
Lemma~\ref{plusminus} tells us that $(\phi(I), \alpha_i^{\vee}) = 1$ if and only if $(\phi(\Psi(I)), \alpha_i^{\vee}) = -1$.  If $(\phi(I), \alpha_i^{\vee}) = 1$, then \begin{align*} 2 \frac{(\phi(I), \alpha_i^{\vee}) (\phi(I), \omega_i)}{(\alpha_i, \alpha_i)} &+ 2 \frac{(\phi(\Psi(I), \alpha_i^{\vee})) (\phi(\Psi(I)), \omega_i)}{(\alpha_i, \alpha_i)} \\ &= 2 \frac{(\phi(I), \omega_i)}{(\alpha_i, \alpha_i)} - 2 \frac{(\phi(\Psi(I)), \omega_i)}{(\alpha_i, \alpha_i)} \\ &= 2 \frac{(\alpha_i, \omega_i)}{(\alpha_i, \alpha_i)} = 1 \end{align*} because the bases $\lbrace \alpha_1^{\vee}, \alpha_2^{\vee}, \ldots, \alpha_t^{\vee} \rbrace$ and $\lbrace \omega_1, \omega_2, \ldots, \omega_t \rbrace$ are biorthogonal.  

Hence \begin{align*} \sum_{k=0}^{m-1} 2 \frac{(\phi(\Psi^k(I)), \alpha_i^{\vee}) (\phi(\Psi^k(I)), \omega_i)}{(\alpha_i, \alpha_i)} &= \sum_{k=0}^{m-1} |\lbrace p \in P_{\lambda}^i : t_p(\Psi^k(I)) = I \cup \lbrace p \rbrace \rbrace| \\ &= \sum_{k=0}^{m-1} |\lbrace p \in P_{\lambda}^i : t_p(\Psi^k(I)) = I \setminus \lbrace p \rbrace \rbrace| \\ &= \sum_{k=0}^{m-1} g^i(\Psi^k(I)). \end{align*}
\end{proof}

We proceed to the proof of Theorem~\ref{mainanti}.  Suppose that $\mathfrak{g}$ is simply laced, and let $\Omega$ be the common length of the roots of $\mathfrak{g}$.  Let $g \colon J(P_{\lambda}) \rightarrow \mathbb{R}$ be the antichain cardinality statistic.  Then \begin{align*}\sum_{k=0}^{m-1} g(\Psi^k(I)) &= \sum_{k=0}^{m-1} \sum_{i=1}^t g^i(\Psi^k(I)) \\ &=  \sum_{k=0}^{m-1} \sum_{i=1}^t 2 \frac{(\phi(\Psi^k(I)), \alpha_i^{\vee})(\phi(\Psi^k(I)), \omega_i)}{\Omega^2} \\& = \sum_{k=0}^{m-1} \left( \Psi^k(I), \sum_{i=1}^t 2 \frac{(\phi(\Psi^k(I)), \alpha_i^{\vee}) \omega_i} {\Omega^2} \right) \\ &= \sum_{k=0}^{m-1} 2 \frac{(\Psi^k(I), \Psi^k(I))}{\Omega^2}, \end{align*} where the last equality follows from Lemma~\ref{ident}.  

Since the action of $W$ on $\Lambda_{\lambda}$ is transitive and $W \subset O(\mathfrak{h}_{\mathbb{R}}^*)$, it follows that $(\mu, \mu) = (\lambda, \lambda)$ for all $\mu \in \Lambda_{\lambda}$.  Hence $g$ is $c$-mesic with $c = 2 \frac{(\lambda, \lambda)}{\Omega^2}$.  \qed

\section{Acknowledgments}

The research that culminated in this article had its origins in a project undertaken at the Massachusetts Institute of Technology under the aegis of the Research Science Institute, administered by the Center for Excellence in Education.  It is the authors' pleasure to extend their gratitude to Pavel Etingof, Slava Gerovitch, David Jerison, and Tanya Khovanova for coordinating the RSI program within the MIT Department of Mathematics.  They also thank Alexander Postnikov, James Propp, Victor Reiner, Thomas Roby, XiaoLin Shi, and Nathan Williams for helpful conversations.  

The first author is presently supported by the US National Science Foundation Graduate Research Fellowship Program.


\begin{thebibliography}{13}
\bibitem{Bourbaki} N. Bourbaki, \textit{Lie Groups and Lie Algebras, Chapters 4--6}, translated from the 1968 French original by A. Pressley, Berlin: Springer-Verlag, 2002.  

\bibitem{Duchet} P. Duchet, Sur les hypergraphes invariants, \textit{Discrete Math.} \textbf{8} (1974), 269-280.  
\bibitem{FDF} D. G. Fon-Der-Flaass, Orbits of antichains in ranked posets, \textit{European J. Combin.} \textbf{14} (1993), 17-22.  
\bibitem{Kirillov} A. A. Kirillov, Jr., \textit{An Introduction to Lie Groups and Lie Algebras}, New York: Cambridge University Press, 2008.  
\bibitem{Rush} D. B. Rush and X. Shi, On orbits of order ideals of minuscule posets, \textit{J. Algebraic Combin.} \textbf{37} (2013), 545-569.  
\bibitem{Proctor} R. A. Proctor, Bruhat lattices, plane partition generating functions, and minuscule representations, \textit{European J. Combin.} \textbf{5} (1984), 331-350.
\bibitem{Propp} J. Propp and T. Roby, Homomesy in products of two chains, arXiv:1310.5201.  
\bibitem{Reiner} V. Reiner, D. Stanton, and D. White, The cyclic sieving phenomenon, \textit{J. Combin. Theory Ser. A} \textbf{108} (2004), 17-50.   
\bibitem{ec1} R. P. Stanley, \textit{Enumerative Combinatorics: vol. 1}, New York: Cambridge University Press, 1997.  
\bibitem{StanleyP} R. P. Stanley, Promotion and evacuation, \textit{Electron. J. Combin.} \textbf{16} (2009), R9.  
\bibitem{Stembridge} J. R. Stembridge, On the fully commutative elements of Coxeter groups, \textit{J. Algebraic Combin.} \textbf{5} (1996), 353-385. 
\bibitem{Stembridge2} J. R. Stembridge, Minuscule elements of Weyl groups, \textit{J. Algebra} \textbf{235} (2001), 722-743.  
\bibitem{Striker} J. Striker and N. Williams, \textit{European J. Combin.} \textbf{33} (2012), 1919-1942.  

\end{thebibliography}
\end{document}